\tiny\color{gray}, % 行番号のスタイル
\theoremstyle{thmstyleone}%
\newtheorem{theorem}{Theorem}%  meant for continuous numbers
\newtheorem{proposition}[theorem]{Proposition}% 
\newtheorem{lemma}[theorem]{Lemma}% 
\newtheorem{conjecture}{Conjecture}
\theoremstyle{thmstyletwo}%
\newtheorem{example}{Example}%
\theoremstyle{thmstylethree}%
\newtheorem{definition}{Definition}%
\newcommand{\IE}{\textit{i.e., }}
\newcommand{\EG}{\textit{e.g., }}
\newcommand{\ZZ}{{\mathbb Z}}
\newcommand{\QQ}{{\mathbb Q}}
\newcommand{\RR}{{\mathbb R}}
\newcommand{\tr}[1]{{\hspace{0.5mm}{}^t\hspace{-0.5mm}#1}}
\newcommand{\abs}[1]{ {\left\lvert #1 \right\rvert} }
\begin{document}

\title[Topological embedding of the space of 3D lattices]{Piecewise-linear embeddings of the space of 3D lattices into $\RR^{13}$ for high-throughput handling of lattice parameters}

%%=============================================================%%
%% Prefix	-> \pfx{Dr}
%% GivenName	-> \fnm{Joergen W.}
%% Particle	-> \spfx{van der} -> surname prefix
%% FamilyName	-> \sur{Ploeg}
%% Suffix	-> \sfx{IV}
%% NatureName	-> \tanm{Poet Laureate} -> Title after name
%% Degrees	-> \dgr{MSc, PhD}
%% \author*[1,2]{\pfx{Dr} \fnm{Joergen W.} \spfx{van der} \sur{Ploeg} \sfx{IV} \tanm{Poet Laureate} 
%%                 \dgr{MSc, PhD}}\email{iauthor@gmail.com}
%%=============================================================%%

\author*[1]{\fnm{Ryoko} \sur{Oishi-Tomiyasu}}\email{tomiyasu@imi.kyushu-u.ac.jp}

%\author[2,3]{\fnm{Second} \sur{Author}}\email{iiauthor@gmail.com}
%\equalcont{These authors contributed equally to this work.}

%\author[1,2]{\fnm{Third} \sur{Author}}\email{iiiauthor@gmail.com}
%\equalcont{These authors contributed equally to this work.}

\affil*[1]{\orgdiv{Institute of Mathematics for Industry (IMI)}, \orgname{Kyushu University}, \orgaddress{\street{744, Moto'oka}, \city{Nishi-ku}, \postcode{8140015}, \state{Fukuoka}, \country{Japan}}}

%\affil[2]{\orgdiv{Department}, \orgname{Organization}, \orgaddress{\street{Street}, \city{City}, \postcode{10587}, \state{State}, \country{Country}}}

%\affil[3]{\orgdiv{Department}, \orgname{Organization}, \orgaddress{\street{Street}, \city{City}, \postcode{610101}, \state{State}, \country{Country}}}

%%==================================%%
%% sample for unstructured abstract %%
%%==================================%%

\abstract{
We present two methods to continuously and piecewise-linearly parametrize rank-3 lattices by vectors of $\RR^{13}$,
which provides an efficient way to judge if two sets of parameters provide nearly identical lattices within their margins of errors.
Such a parametrization can be used to speed up 
scientific computing involving periodic structures in $\RR^3$ such as crystal structures, which includes database querying, detection of duplicate entries, and structure generation via deep learning techniques.
One gives a novel application of Conway's vonorms and conorms, and another is achieved through a natural extension of Ry{\u s}hkov's $C$-type to the setting modulo $3$.
Voronoi vectors modulo 3 obtained in the latter approach provide an algorithm for enumerating of all potential isometries under perturbations of lattice parameters.
}

\keywords{lattice, identification, continuous parametrization, lattice-basis reduction, C-type}

%%\pacs[JEL Classification]{D8, H51}

%%\pacs[MSC Classification]{35A01, 65L10, 65L12, 65L20, 65L70}

\maketitle

\section*{Introduction}

Positive-definite symmetric matrices called \textit{Gram matrices} are widely used to parametrize lattices.
In the parametrization, symmetric matrices $S_1$ and $S_2$ of degree $n$ correspond to an identical lattice of rank $n$ if and only if $S_1$ and $S_2$ are equivalent over $\ZZ$, \IE $g S_1 \tr{g} = S_2$ for some $g \in GL_n(\ZZ)$.
Therefore, the parametrization space (\IE \textit{moduli} space of lattices of rank-$n$) is the orbit space ${\mathcal LS}_n := GL_n(\ZZ) \backslash {\mathcal S}_{\succ 0}^n$,
a quotient of the convex cone ${\mathcal S}_{\succ 0}^n$ that consists of all positive-definite symmetric matrices. 

The metric on ${\mathcal LS}_n$ is induced by a metric on ${\mathcal S}_{\succ 0}^n$.
The classical metric on ${\mathcal S}_{\succ 0}^n$, derived from the theory of Riemannian symmetric spaces, is referred to as the \textit{affine-invariant Riemannian metric} in the field of computer vision, which is known to be computationally expensive \cite{Quang2017} as it requires eigenvalue decompositions. Consequently, the metric on ${\mathcal LS}_n$ induced by the affine-invariant metric becomes more computationally demanding, since it involves solving a minimization problem over $g \in GL_n(\mathbb{Z})$.

As is illustrated in the algorithm of Table~\ref{table: calculation of the distance on LS_3},
a similar situation arises even when the \textit{vonorm map} \cite{Conway92} is used, which 
is the continuous parametrization of quadratic forms originally introduced by Voronoi \cite{Voronoi08}; for any $S \in {\mathcal S}_{\succ 0}^n$, 
its \textit{vonorm map} is defined as a map $\Lambda_n := (\ZZ^n/2\ZZ^n) \setminus \{ 2 \ZZ^n \} \rightarrow \RR$ given by
\begin{eqnarray*}
	{\rm vo}_{S} (u + 2\ZZ^n) := \min \{ v S \tr{v} : v \in u + 2\ZZ^n \}.
\end{eqnarray*}
Clearly $S \rightarrow {\rm vo}_{S}$ gives a continuous map ${\mathcal S}_{\succ 0}^n \rightarrow \RR^{\Lambda_n}$, and induces a continuous map $f_n : {\mathcal LS}_n \rightarrow GL_n(\ZZ/2 \ZZ) \backslash \RR^{\Lambda_n}$. 
The comparison algorithm of Table~\ref{table: calculation of the distance on LS_3} works properly in any case as long as $f_n$ is injective, \IE the Conway-Sloane conjecture is true for $n$ \cite{Conway92}.
\begin{conjecture}[Conway-Sloane]
The vororm map ${\rm vo}_{S}(u)$ determines the class of $S \in {\mathcal S}_{\succ 0}^n$ in ${\mathcal{LS}}_n$ uniquely.
\end{conjecture}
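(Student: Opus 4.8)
The plan is to factor the problem through Conway's \emph{conorms}. On the finite group $\ZZ^n/2\ZZ^n$, extend the vonorm data to a function by declaring its value at $2\ZZ^n$ to be $0$; then the conorm vector $p_S = (p_S(w))_w$ is obtained from ${\rm vo}_S$ by an invertible linear transformation of Hadamard (finite-Fourier) type, so that ${\rm vo}_S$ and $p_S$ determine each other, and the permutation action of $GL_n(\ZZ/2\ZZ)$ on $\Lambda_n$ is intertwined with the induced action on conorm coordinates. Consequently it is enough to prove: the conorm vector, up to the $GL_n(\ZZ/2\ZZ)$-action --- equivalently, the conorm multiset together with the incidence pattern of which conorms coincide --- determines the class of $S$ in ${\mathcal{LS}}_n$.

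For $n \le 3$ this can be carried out completely, which is all that the comparison algorithm of Table~\ref{table: calculation of the distance on LS_3}, and hence the present paper, requires. First I would invoke Selling reduction: every $S \in {\mathcal S}_{\succ 0}^n$ with $n \le 3$ admits an \emph{obtuse superbase}, \IE vectors $v_0, \dots, v_n$ of the associated lattice with $v_1, \dots, v_n$ a $\ZZ$-basis, $v_0 = -(v_1 + \cdots + v_n)$, and all Selling parameters $p_{ij} := -v_i \cdot v_j$ non-negative. The next step is the identification of each $p_{ij}$ with the corresponding conorm $p_S(w)$, so that the conorms are exactly the Selling parameters of an obtuse superbase. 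Then the Gram data of $S$ is recovered linearly from the conorms via $v_i \cdot v_j = -p_{ij}$ and $v_i \cdot v_i = \sum_{j \ne i} p_{ij}$, so the \emph{labelled} superbase is pinned down; and one checks that two obtuse superbases (of possibly different lattices) sharing the same conorm multiset and incidence pattern differ only by a relabelling of $\{0, \dots, n\}$ followed by a lattice isomorphism, which is precisely the ambiguity already quotiented out on the target of $f_n$. This yields injectivity of $f_n$ for $n \le 3$.

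The main obstacle, and the reason the statement remains a conjecture in general, is dimension $\ge 4$: there a positive-definite form need not possess an obtuse superbase, some conorms are genuinely negative, and the clean dictionary ``conorms $=$ Selling parameters'' collapses. One is then forced to work with the Delone/Voronoi ($L$-type) subdivision attached to $S$ and to argue that (i) the vonorm vector already determines the combinatorial $L$-type, and (ii) inside a fixed $L$-type the recovery of $S$ from the vonorms --- which is once more a linear problem on the $L$-type cone --- is injective modulo the finite symmetry group of that type. Step (ii) is the easier half; the crux is (i), \IE that the vonorm map does not glue together points lying in distinct $L$-type cones beyond their common boundary, and I do not see a dimension-free argument for it, since the number and geometry of $L$-types proliferate rapidly with $n$. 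Hence my proposal settles the conjecture in the range actually needed here ($n \le 3$) and isolates the separation-of-$L$-types problem as the essential difficulty in the general case.
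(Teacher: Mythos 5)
This statement is labelled a \emph{conjecture} in the paper, and the paper supplies no proof of it: it only records that the cases $n \le 3$ are due to Conway--Sloane, $n=4$ to Vallentin, and $n=5$ to Dutour Sikiri\'c et al., and that the comparison algorithm of Table~\ref{table: calculation of the distance on LS_3} is correct whenever the conjecture holds. So there is nothing in the paper for your argument to match, and your closing claim that the case $n \ge 4$ is open contradicts the paper's own statement that it is settled up to $n=5$. Your restriction to $n \le 3$ via obtuse superbases is the standard Conway--Sloane route, but as written it has a genuine gap.

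The gap is the parenthetical reduction ``the conorm vector up to the $GL_n(\ZZ/2\ZZ)$-action --- equivalently, the conorm multiset together with the incidence pattern of which conorms coincide.'' These are not equivalent. For $n=3$ the group $GL_3(\ZZ/2\ZZ)$ acts on the seven nontrivial characters as the automorphism group of the Fano plane, of order $168$, whereas the stabilizer of a multiset with all values distinct inside the full symmetric group has order $7! = 5040$; the multiset plus coincidence data therefore forgets the projective incidence structure that the $GL$-orbit retains. This makes your final step false as stated: two obtuse superbases whose six Selling parameters form the same multiset (all distinct, hence trivial coincidence pattern) need \emph{not} differ by a relabelling of $\{0,\dots,3\}$ composed with an isometry, since relabellings induce only an $S_4$ (order $24$) on the six pairs $\{i,j\}$ while $6!=720$ assignments share the multiset; generic reassignments change the isometry class. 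What actually has to be proved --- and what you dispose of with ``one checks'' --- is that the conorm function \emph{on the Fano plane}, up to its automorphism group, determines the lattice, which in particular requires showing the reconstruction is independent of which zero of the conorm function is taken as the special point and of the choice of obtuse superbase. That well-definedness is the entire content of the $n\le 3$ theorem, so the proposal assumes precisely what it sets out to prove.
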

The conjecture has been proved true for $n$ up to $5$. 
The cases $n = 4$ and $n = 5$ were proved in~\cite{Vallentin2003} and~\cite{Sikiric2022}, respectively.
The number of matrices $g \in GL_2(\ZZ/2 \ZZ)$ required for the comparison is given by
$\prod_{k=1}^{n} (2^n - 2^{k-1})$,
which evaluates to \(6\), \(168\), \(20{,}160\), and \(9{,}999{,}360\) for $n = 2$--5, respectively.

The other reduction theories also yield various injective maps
$\iota: {\mathcal LS}_n \hookrightarrow \RR^{n (n+1)/2}$,
which are discontinuous at the boundaries of the fundamental domains.  
To address this issue, the use of \textit{nearly reduced bases} (\IE bases that satisfy the inequalities for being reduced within a certain tolerance) has been proposed by \cite{Gruber73} for Buerger reduced cells\cite{Buerger57} used in crystallography.  
Even in the algorithm, the number of nearly reduced bases often exceeds 100 when the rank is 3.

The main result of this article is a method that allows one to tell whether two lattices of rank~3 are nearly identical, simply by comparing their images under an embedding ${\mathcal LS}_3 \rightarrow \RR^{13}$.
This is a natural extension of the recent result for rank-2 lattices in \cite{Kurlin2022}, which makes use of the fact that
vonorm values (or conorms $p_{ij}$ \cite{Conway92}, or root invariants $\sqrt{p_{ij}}$ \cite{Kurlin2022}) sorted in 
ascending order provide a continuous embedding of ${\mathcal LS}_2$ into $\RR^3$. 
Table~\ref{Identification algorithm for lattices of rank 2 that uses vonorm values}
presents a method to compare two rank-2 lattices using a metric $\RR^3$ derived of their vonorm values.

For lattices of rank 3, we propose to use 
a continuous injective map $\iota_s$ or $\iota_m$ from ${\mathcal LS}_3 := GL_3(\ZZ) \backslash {\mathcal S}_{\succ 0}^3$ into $\RR^{13}$.
In what follows, $[ \ ]$ denotes a sorted list in ascending order.
\begin{definition}[$\iota_s$ for Selling reduction]
Let $\iota_s(S) := (f_1(S), f_2(S)) \in \RR^{13}$ be the concatenation of the two vectors $f_1(S) \in \RR^7$, $f_2(S) \in \RR^6$ where
\begin{eqnarray*}
f_1(S)
&:=&
\left[\, {\rm vo}_{S} (g) : g \in \Lambda_3 \right], \\
f_2(S) &:=& \left[\, \sum_{g \in \Lambda_3} \chi (g) {\rm vo}_{S} (g) 
: 
\begin{matrix}
\chi \text{ is a group character } (\ZZ/2\ZZ)^3 \rightarrow \{ \pm 1 \} \\
\text{ that does not satisfy } \\
\chi((1,0,0)) = \chi((0,1,0)) = \chi((0,0,1))  
\end{matrix}
\right].
\end{eqnarray*}
The entries of $f_1(S)$ and $f_2(S)$ are vonorms and conorms in \cite{Conway92}.
In particular, if $S$ is Selling-reduced, 
\begin{eqnarray*}
f_1(S)
&=&
\left[\, v S \tr{v} : v = (1, 0, 0), (0, 1, 0), (0, 0, 1), (1, 1, 0), (1, 0, 1), (0, 1, 1), (1, 1, 1) \right], \\
f_2(S)
&=&
\left[\, -s_{ij} : \text{ non-diagonal entries $s_{ij}$ of $\tilde{S}$ in Eq.(\ref{eq: definition of tilde{S}}) } \right].
\end{eqnarray*}
\end{definition}

\begin{definition}[$\iota_m$ for Minkowski reduction]
For any $n$-by-$n$ metric tensor $S$ and integer $r \ge 2$, define 
the \textit{vonorm map modulo $r$} as the map $(\ZZ^n/r\ZZ^n) \setminus \{ r \ZZ^n \} \rightarrow \RR$ given by
$$
	{\rm vo}_{S, r} (u + r \ZZ^n) := \min \{ v S \tr{v} : v \in u + r \ZZ^n \},
$$
which induces a map $\{ \pm 1 \} \backslash ((\ZZ^n/r \ZZ^n) \setminus \{ 3 \ZZ^n \}) \rightarrow \RR$ due to ${\rm vo}_{S, r} (u + r \ZZ^n) = {\rm vo}_{S, r} (-u + r \ZZ^n)$.
A continuous map from ${\mathcal S}^3_{\succ 0} \rightarrow \RR^{13}$ is given by
\begin{eqnarray*}
	\iota_m(S) := \left[ {\rm vo}_{S, 3} (u + 3\ZZ^3) : \pm u + 3\ZZ^3 \in \{ \pm 1 \} \backslash ((\ZZ^3/3\ZZ^3) \setminus\{ 3 \ZZ^n \}) \right].
\end{eqnarray*}
\end{definition}

It can be easily verified that $\iota_s$ and $\iota_m$ are continuous and piecewise linear, and induce
maps from ${\mathcal LS}_3$ to $\RR^{13}$.
Our main results are the injectivity of $\iota_s$ and $\iota_m$, in addition to a method to calculated $\iota_m$:
\begin{theorem}\label{thm: method to calculate iota_m}
If $S$ is Minkowski-reduced, the $\iota_m(S)$ is the sorted list of the following 13 real values.
\begin{itemize}
\item 
$v S \tr{v}$, where 
$v = (1, 0, 0)$, $(0, 1, 0)$, $(0, 0, 1)$, $(1, 1, 0)$, $(1, 0, 1)$, $(0, 1, 1)$,
$(1, 1, 1)$, $(1,-1, 0)$, $(1, 0, -1)$, $(0, 1, -1)$,
\item $\min \{ v S \tr{v} : v = (-1, 1, 1), (2, 1, 1) \}$, 
\item $\min \{ v S \tr{v} : v = ( 1, 1,-1), (1, 1, 2) \}$,
\item $\min \{ v S \tr{v} : v = (1, -1, 1), (1, 2, 1), (-2, -1, 1), (1, -1, -2) \}$.
\end{itemize}

\end{theorem}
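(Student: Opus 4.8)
Throughout, write $Q(v) := v S \tr{v}$, so that ${\rm vo}_{S,3}(\pm u)=\min\{Q(v):v\equiv u\bmod 3\}$, and recall that $\iota_m(S)$ is by definition the sorted list of the $13$ numbers ${\rm vo}_{S,3}(\pm u)$ as $\pm u$ ranges over the nonzero classes of $(\ZZ^3/3\ZZ^3)/\{\pm 1\}$; representatives of these classes are the three $e_i$, the six $e_i\pm e_j$ with $i<j$, and the four $e_1+e_2+e_3,\ -e_1+e_2+e_3,\ e_1+e_2-e_3,\ e_1-e_2+e_3$. It therefore suffices to verify, for Minkowski-reduced $S$, that the minimum attached to each of these $13$ classes equals the value the statement assigns to it. I would split this into (a) a lemma bounding the minimizers, and (b) a class-by-class comparison.

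(a) If $Q(v)={\rm vo}_{S,3}(\pm u)$ then $Q(v)\le Q(v+\ell)$ for every $\ell\in 3\ZZ^3$, so $v$ lies in the Voronoi region about $0$ of the lattice $(3\ZZ^3,S)=3\cdot(\ZZ^3,S)$, which is three times that of $(\ZZ^3,S)$. From the Minkowski conditions $s_{11}\le s_{22}\le s_{33}$, $2|s_{ij}|\le s_{ii}$ for $i<j$, and $Q(v)\ge s_{33}$ for $v\notin\ZZ e_1+\ZZ e_2$, one gets that the Voronoi region of $(\ZZ^3,S)$ lies in the unit cube $[-1,1]^3$ in the coordinates of $e_1,e_2,e_3$; hence every minimizing $v$ has all coordinates in $\{-2,-1,0,1,2\}$, and a short direct estimate removes the coordinates $\pm 3$. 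So for each class only the vectors named in the statement, plus a handful of spurious candidates, remain — these surviving vectors are the ``Voronoi vectors modulo $3$'' of the abstract, and the analysis here is where the reduction domain is cut into the pieces of the mod-$3$ $C$-type.

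(b) For the ten classes whose value is a single term $Q(v_0)$ — the three $e_i$, the six $e_i\pm e_j$, and $e_1+e_2+e_3$ — I would show the representative $v_0$ with coordinates in $\{-1,0,1\}$ is the strict minimizer. Each remaining candidate is $w=v_0+3\varepsilon e_i$ or $v_0+3\varepsilon e_i+3\delta e_j$ with $\varepsilon,\delta\in\{\pm 1\}$, and $Q(w)-Q(v_0)$ expands to a linear form in the entries of $S$ that is nonnegative by the Minkowski conditions: competitors moving an $e_1$- or $e_2$-coordinate need only $2|s_{ij}|\le s_{ii}$ and $s_{11}\le s_{22}\le s_{33}$, whereas those moving an $e_3$-coordinate, and all competitors for the class of $e_1+e_2+e_3$, in addition require $Q(v)\ge s_{33}$ for $v=(\pm 1,\pm 1,1)$ together with the sign/uniqueness part of Minkowski reduction. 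That last ingredient is essential: the two $GL_3(\ZZ)$-equivalent Gram matrices of the face-centred cubic lattice with diagonal $(2,2,2)$ and off-diagonals all equal to $1$, respectively $(s_{23},s_{13},s_{12})=(-1,0,-1)$, both satisfy the successive-minimum inequalities, yet the minimum over the class of $e_1+e_2+e_3$ is $6$ for the former and $2$ for the latter — so only one of the two can count as Minkowski-reduced, and the theorem is stated for that one.

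The three remaining classes — those of $-e_1+e_2+e_3$, $e_1+e_2-e_3$, $e_1-e_2+e_3$ — carry the real content, and they are handled asymmetrically precisely because $s_{11}\le s_{22}\le s_{33}$ breaks the symmetry among the coordinates. Here the shortest vector in the class genuinely changes as $S$ moves through the reduction domain, so rather than naming one winner I would prove: for every surviving candidate $w$ in the class that is not among the $2$ (resp.\ $2$, resp.\ $4$) vectors listed for that class, and for every Minkowski-reduced $S$, $Q(w)\ge\min\{Q(v):v\text{ listed for that class}\}$; equivalently, the Minkowski cone is covered by the finitely many closed half-spaces $\{S:Q(w)\ge Q(v)\}$ with $v$ running over the listed vectors. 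This is a finite family of linear-feasibility checks, one per spurious $w$. I expect this to be the main obstacle: there are on the order of a dozen of them across the three classes, each resting on a specific combination of the Minkowski inequalities, and the pattern of which $\pm 3 e_i$-shifts of a representative survive — a single one for two of the classes, three for the class of $e_1-e_2+e_3$ — is forced by the ordering $s_{11}\le s_{22}\le s_{33}$ together with the sign conditions and must be tracked carefully. Once every spurious candidate is eliminated, collecting the per-class minima reproduces the thirteen numbers of the statement, and sorting them gives $\iota_m(S)$.
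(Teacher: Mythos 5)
Your overall plan --- reduce each of the $13$ classes mod $3$ to a finite candidate list, then eliminate spurious candidates by linear inequalities valid on the Minkowski cone --- is a sound, more elementary reformulation of what the paper actually does. The paper obtains this theorem as a corollary of its classification of primitive $C$-type domains modulo $3$: Proposition~\ref{prop:facet and parallelogram law} describes how adjacent primitive domains differ (swap $\pm u$ for $\pm v$ with $u\equiv v \bmod 3$ and $(u-v)/3$, $(2u+v)/3$, $(u+2v)/3$ in the common wall), the algorithm of Table~\ref{table: system of representatives algorithm} enumerates all such domains up to $GL_3(\ZZ)$, and ten explicit domains ${\mathcal D}_3(\Phi_1),\dots,{\mathcal D}_3(\Phi_{10})$ are then shown by direct (machine) calculation to cover ${\mathcal D}_{min}$. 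The union of these $\Phi_j$ is exactly your candidate list, and your ``covering of the Minkowski cone by half-spaces'' for the three hard classes is precisely the statement ${\mathcal D}_{min}\subset\bigcup_{j=1}^{10}{\mathcal D}_3(\Phi_j)$ in different clothes. What the paper's route buys is a completeness certificate: the facet/adjacency analysis guarantees that no Voronoi vector modulo $3$ other than those in the ten $\Phi_j$ can occur over ${\mathcal D}_{min}$.

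The gap in your version sits exactly where that certificate is needed, namely step (a). The claim that ${\rm DV}(S)\subset[-1,1]^3$ in the $e_i$-coordinates for Minkowski-reduced $S$ is asserted, not proved, and does not follow formally from the inequalities you cite: ${\rm DV}(S)$ is an intersection of slabs $\abs{xS\tr{v}}\le \frac12 vS\tr{v}$, which constrain $x$ in the directions $S\tr{v}$ rather than in the coordinate directions, so bounding the coordinates of its vertices is a genuine argument that you have not supplied --- and without it your candidate list has no completeness guarantee. (Even granting the containment, $v\in 3\,{\rm DV}(S)$ gives coordinates in $\{-3,\dots,3\}$, not $\{-2,\dots,2\}$ as written; the elimination of $\pm 3$ and then of $\pm 2$ in most positions is part of the same missing computation.) Finally, the decisive step (b) for the three asymmetric classes --- the dozen-odd linear feasibility checks over the Minkowski cone --- is described but not carried out; since the paper itself delegates the analogous verification to an exhaustive Magma search this is partly a matter of degree, but as written your argument fixes the shape of the proof rather than completing it. Your observation that the sign conditions in Eq.~(\ref{eq: inequalities for being Minkowski reduced}), and not merely the successive-minima inequalities, are essential (the face-centred cubic example) is correct and worth keeping.
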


\begin{theorem}\label{thm: injective}
The maps ${\mathcal LS}_3 \rightarrow \RR^{13}$ induced by $\iota_s$ and $\iota_m$ are injective.
\end{theorem}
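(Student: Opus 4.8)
The plan is to establish the injectivity of each induced map $\mathcal{LS}_3\to\RR^{13}$ in two stages: (i) reduce to a distinguished representative of the $GL_3(\ZZ)$-class — Selling-reduced for $\iota_s$, Minkowski-reduced for $\iota_m$ — on which the $13$ values take the explicit forms recalled in the two Definitions and in Theorem~\ref{thm: method to calculate iota_m}; and (ii) show that the \emph{sorted} $13$-tuple determines that reduced representative up to the finite residual symmetry of its reduction domain, so that a completeness theorem for the reduction theory finishes the argument. For $\iota_s$ the completeness input is the Conway--Sloane conjecture in the case $n=3$ (known); for $\iota_m$ it is the classical fact that Minkowski-reduced forms constitute a fundamental domain with only finitely many interior identifications. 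Stage~(i) is routine: every rank-$3$ form has an obtuse superbase and a Minkowski-reduced representative, and $\iota_s,\iota_m$ are already $GL_3(\ZZ)$-invariant (as noted in the excerpt), so one may compare two classes through reduced representatives freely. The mathematical content is stage~(ii), a reconstruction problem.

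For $\iota_s$, fix a Selling-reduced $S$ with obtuse superbase $v_0,v_1,v_2,v_3$ and conorms $p_{ij}=-v_i\cdot v_j\ge 0$, and put $P=\sum_{i<j}p_{ij}$. A short computation shows the seven vonorms of $S$ are the four ``vertex-star'' sums $\|v_i\|^2=\sum_{j\ne i}p_{ij}$ ($i=0,1,2,3$) together with the three ``opposite-edge'' values $P-(p_{ij}+p_{kl})$, one for each pair $\{ij,kl\}$ of opposite edges of the tetrahedron. Hence, if $\iota_s(S_1)=\iota_s(S_2)$, then for Selling-reduced representatives the multiset $\{p_{ij}\}$ of conorms is recovered from $f_2$ (so $P$ is known) and the multiset of seven vonorms from $f_1$, and it remains to recover the assignment of the $p_{ij}$ to the edges of the tetrahedron up to the $S_4$-action on $\{v_0,v_1,v_2,v_3\}$. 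I would first use the vonorms, with $P$ known, to identify the three opposite-edge values — an opposite-edge value is $P$ minus a two-conorm sum, a vertex value is a three-conorm sum, and consistency with the vertex-star equations pins this down — which is equivalent to recovering the partition of the six conorms into the three opposite pairs; then the four vertex-star equations form a small linear system that fixes the remaining labelling up to $S_4$. Since a conorm-labelled tetrahedron determines the $\Lambda_3$-labelled vonorm map, two Selling-reduced forms with $S_4$-equivalent conorm tetrahedra have $GL_3(\ZZ/2\ZZ)$-equivalent vonorm maps, and the Conway--Sloane conjecture for $n=3$ yields $[S_1]=[S_2]$.

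For $\iota_m$, fix a Minkowski-reduced $S=(s_{ij})$ and apply Theorem~\ref{thm: method to calculate iota_m}: ten of the $13$ values are $vS\,\tr v$ for $v\in\{e_1,e_2,e_3,e_1\pm e_2,e_1\pm e_3,e_2\pm e_3,e_1+e_2+e_3\}$, and the other three are the explicit minima of Theorem~\ref{thm: method to calculate iota_m} attached to the residue classes $(1,1,2),(1,2,1),(2,1,1)$ modulo $3$. From the values at $e_1,e_2,e_3$ and at $e_i\pm e_j$ one reads off $\{s_{11},s_{22},s_{33}\}$ and the magnitudes $|s_{ij}|$; the value at $e_1+e_2+e_3$ gives $\sum_{i<j}s_{ij}$; and the three minimum-entries, which by Theorem~\ref{thm: method to calculate iota_m} are explicit signed expressions in the $s_{ij}$, are used to resolve the individual signs of $s_{12},s_{13},s_{23}$. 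This recovers $S$ up to the finite symmetry group of the Minkowski domain (permutations of equal diagonal entries and the admissible sign changes), hence the class. Alternatively one may route the whole argument through ${\rm vo}_{S,3}$ — prove the mod-$3$ analogue of the Conway--Sloane conjecture for $n=3$, presumably via the paper's C-type-modulo-$3$ machinery, and then show $\iota_m$ recovers ${\rm vo}_{S,3}$ up to $GL_3(\ZZ/3\ZZ)$.

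I expect the main obstacle, in both cases, to be the degenerate configurations: when several vonorms or conorms coincide, when some conorm vanishes (on a wall of the Selling cone), or on the boundary of the Minkowski domain, the ``read the parameters off distinguished entries'' step of stage~(ii) can fail to determine the labelling, and a priori two inequivalent classes could share the same $13$-tuple. The crux is therefore a finite but delicate case analysis — naturally organized by the number of distinct values among the $13$ and by which linear relations among the $p_{ij}$ (respectively, which Minkowski inequalities) become equalities — verifying that every labelling of the multisets compatible with the constraints lies in a single $S_4$-orbit (respectively, a single Minkowski-symmetry orbit), and that the boundary identifications of the reduced domains produce no additional collisions. Establishing that this enumeration of cases is exhaustive is where the real effort goes.
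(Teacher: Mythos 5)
Your proposal correctly identifies the two-stage structure (pass to a reduced representative, then reconstruct it from the sorted $13$-tuple), and the reduction of the $\iota_s$ case to the known $n=3$ case of the Conway--Sloane conjecture is a legitimate move. But the proof has a genuine gap at exactly the point you flag yourself: the entire content of the theorem is that the \emph{sorted} multisets determine the labelled data up to the finite residual symmetry, and your argument for this is an assertion (``consistency with the vertex-star equations pins this down'', ``one reads off $\{s_{11},s_{22},s_{33}\}$ and the magnitudes $|s_{ij}|$'') rather than a proof. Sorting destroys the assignment of values to vectors; for $\iota_m$ you cannot ``read off'' which of the $13$ entries is $\mathbf{e}_1 S\,\tr{\mathbf{e}}_1$ versus $(\mathbf{e}_1+\mathbf{e}_2)S\,\tr{(\mathbf{e}_1+\mathbf{e}_2)}$ without resolving all the tie and near-tie configurations, and for $\iota_s$ you must rule out that two inequivalent assignments of the six conorms to the edges of the tetrahedron produce the same pair of multisets $(f_1,f_2)$. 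That this can happen for part of the data is shown by the paper's Example~\ref{exam: case of embedding into 7, 8}: inequivalent Selling-reduced families with identical sorted vonorms \emph{and} identical determinant exist, so nothing short of a complete analysis of the interplay between $f_1$ and $f_2$ (respectively, of all matchings of the $13$ mod-$3$ vonorms) can close the argument.

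The paper does not attempt a hand reconstruction at all: it proves the theorem by exhaustive machine verification. The Minkowski (resp.\ Selling) domain is covered by finitely many primitive $C$-type domains modulo $3$ (resp.\ $2$), on each of which $\iota_*$ is a sorted linear map $S\mapsto F_i(S)\in\RR^{13}$; for every pair of such domains and every permutation $\sigma\in S_{13}$ one forms the polyhedral cone $\{(S_1,S_2): F_{1,k}(S_1)=F_{2,\sigma(k)}(S_2)\}$, discards it if it contains no positive-definite pair, and otherwise checks with the isometry algorithm of Plesken--Souvignier that every such pair is $GL_3(\ZZ)$-equivalent (Table~\ref{table: check injectivity}, run in Magma). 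Your ``finite but delicate case analysis'' is precisely this computation, and without carrying it out --- or finding a structural argument that genuinely replaces it --- the proposal does not establish the theorem.
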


The proofs of the both theorems rely on an exhaustive search, which was conducted using Magma\cite{MR1484478}. 
In order to prove Theorem~\ref{thm: method to calculate iota_m}, 
an algorithm to enumerate all equivalent primitive $C$-types modulo $r \ge 2$ is provided for the first time. 
For $r = 2$, the method is essentially the same as those briefly given in \cite{Tomiyasu2013a} and \cite{Sikiric2022}.
An enumeration algorithm for modulo 2 was first given in \cite{Ryshkov73}.
$C$-type domains modulo 2 is also called \textit{iso edge domain} in \cite{Sikiric2022}.

As a consequence of Theorem~\ref{thm: injective}, 
any distance in $\RR^{13}$
provides a metric on ${\mathcal LS}_3$.
The inverse $\iota_*^{-1} : \iota_*({\mathcal LS}_3) \rightarrow {\mathcal LS}_3$ of $\iota_s$ and $\iota_m$ can be computed efficiently, owing to the piecewise linearlity, 
by using the inequalities of Selling-reduced or Minkowski-reduced forms.

The strong Whitney embedding theorem ensures that ${\mathcal LS}_n$ is continuously embeddable in the space $\RR^N$ of dimension $N = 2(n(n+1)/2-1)+1 = n (n+1) - 1$.
%If the $N$ is small, less memory and fewer steps are required for the comparison.
The dimension $13$ is larger than the given lower bound $2(6-1)+1 = 11$ for $n = 3$.
In Example~\ref{exam: case of embedding into 12}, we will also see that 
$GL_3(\ZZ/2\ZZ) \backslash \RR^{\Lambda_3}$ can be embedded into $\RR^{12}$ by a polynomial map.
In this case, any perturbations in the image are significantly amplified when its inverse in 
${\mathcal LS}_3$ is calculated.
In view of this, together with the cases in Example~\ref{exam: case of embedding into 7, 8},  
it seems unlikely that a computationally efficient embedding into a lower-dimensional space can be found easily.

If two lattices parameterized by \( S_1 \) and \( S_2 \) are nearly equal, the next step is to find an isometry \( g \in \mathrm{GL}_n(\mathbb{Z}) \) such that \( S_1 \) and \( g S_2 \tr{g} \) are approximately equal. 
For exact parameters, such an isometry can be determined using the algorithm in~\cite{Plesken97}. 
In the presence of unknown perturbations in \( S_1 \) and \( S_2 \), Theorem~\ref{thm: candidate for g} provides a way to construct such a \( g \in \mathrm{GL}_n(\mathbb{Z} \) in the cases \( n = 2 \) and 3. 
In fact, for $n = 2$, 1.~of Theorem~\ref{thm: candidate for g} ensures the following under a general assumption on the error;
$$
	\tr{g} {\mathbf e}_1, 
	\tr{g} {\mathbf e}_2 \in \{ \pm {\mathbf e}_1, 
					\pm {\mathbf e}_2, 
					\pm ({\mathbf e}_1 + {\mathbf e}_2) \}. 
$$
A similar statement can also be proven for rank-3 lattices.

\begin{table}[htbp]
\caption{ 
Algorithm for computing a distance on $\mathcal{LS}_n$ 
using the vonorm maps and any metric $d$ on $\RR^{\Lambda_n}$. 
This provides a proper metric if the Conway-Sloane conjecture holds for $n$.
}
\label{table: calculation of the distance on LS_3}
\begin{tabular}{lp{95mm}}
\hline \\
\multicolumn{2}{l}{{\bf Input}: } \\ 
$S_1, S_2 \in {\mathcal S}_{\succ 0}^n$ & 
: pair of Gram matrices. \\
$d$ & : metric on $\RR^{\Lambda_n}$ ($=\RR^{2^n-1}$) \\
\multicolumn{2}{l}{{\bf Output}: } \\
& distance between the classes of $S_1$ and $S_2$ in ${\mathcal LS}_n$. \\
\multicolumn{2}{l}{{\bf Algorithm}:} \\ 
1: & Calculate ${\rm vo}_{S_i} \in \RR^{\Lambda_n}$ ($i = 1, 2$) \EG using Fincke-Pohst algorithm \cite{Fincke83}. \\
2: & For each $g \in GL_n(\ZZ/2 \ZZ)$, compute $d({\rm vo}_{S_1}, g \cdot {\rm vo}_{S_2})$. \\
3: & Return the minimum of the distances. \\
\hline
\end{tabular}
\end{table}

\begin{table}[htbp]
\caption{
Algorithm for computing a distance on $\mathcal{LS}_n$ 
using any metric $d$ on $\RR^3$}
\label{Identification algorithm for lattices of rank 2 that uses vonorm values}
\begin{tabular}{lp{93mm}}
\hline \\
\multicolumn{2}{l}{{\bf Input}:} \\ 
$S_1, S_2 \in {\mathcal S}_{\succ 0}^2$ & : pair of reduced matrices, 
where $S = (s_{ij}) \in {\mathcal S}_{\succ 0}^2$ is {\it reduced} if
$0 \le -2 s_{12} \le s_{11} \le s_{22}$. \\
$d$ & : metric on $\RR^3$ \\
%$\epsilon > 0$ & a threshold. \\
\end{tabular}
\begin{tabular}{lp{110mm}}
\multicolumn{2}{l}{{\bf Output}:} \\
& distance between the classes of $S_1, S_2$ in ${\mathcal LS}_2$ induced by $d$. \\
\multicolumn{2}{l}{{\bf Algorithm}:} \\ 
1: & For each $S_i$ ($i = 1, 2$), let $\ell_i$ be the vector 
$(
	\tr{{\mathbf e}_1} S_i {\mathbf e}_1,
	\tr{{\mathbf e}_2} S_i {\mathbf e}_2,
	\tr{({\mathbf e}_1+{\mathbf e}_2)} S_i ({\mathbf e}_1+{\mathbf e}_2)
)$ in $\RR^3$, where ${\mathbf e}_1 := (1, 0)$, ${\mathbf e}_2 := (0, 1)$. The obtained $\ell_i$ is a sorted list as it is.
\\
2: & Return $d(\ell_1, \ell_2)$. \\
\\
\hline
\end{tabular}
\end{table}

%\begin{table}[htbp]
%\caption{Algorithm to judge if two lattices of rank 3 are nearly equal using an embedding $\iota_*$ }
%\label{table: identification algorithm}
%\begin{minipage}{\textwidth}
%\begin{tabular}{lp{93mm}}
%\hline \\
%\multicolumn{2}{l}{{\bf Input}:} \\ 
%$S_1, S_2 \in {\mathcal S}_{\succ 0}^3$: & 
%a pair  of Gram matrices.
%\end{tabular}
%\begin{tabular}{lp{110mm}}
%\multicolumn{2}{l}{{\bf Output}:} \\
%& True if the lattice parametrized by $S_1, S_2$ are identical within their margins of error, otherwise return False. 
%\\
%\multicolumn{2}{l}{{\bf Algorithm}:} \\ 
%1: & Using a reduction algorithm, transform $S_1, S_2$ into their Selling-reduced forms if $\iota_* = \iota_s$, or Minkowski-reduced forms if $\iota_* = \iota_m$. \\
%2: & Calculate $\ell_1 := \iota_*(S_1)$ and $\ell_2 := \iota_*(S_2)$. \\
%3: & Return True if $\ell_1$ and $\ell_2$ are nearly equal (\EG ${\rm dist}(\ell_1, \ell_2) < \epsilon$ for a given distance function on the 13-dimensional parameter space and a lower bound $\epsilon > 0$). 
%Otherwise, return False. \\
%\hline
%\end{tabular}
%\end{minipage}
%\end{table}

\section{Notation and background}

Let \( \mathbf{e}_1, \ldots, \mathbf{e}_n \) be the standard basis of \( \mathbb{R}^n \), \IE each \( \mathbf{e}_i \) is the vector with 1 in the \( i \)-th entry and 0 in all other entries.
All the \( n \times n \) symmetric matrices form a linear space of dimension \( n(n+1)/2 \) over $\RR$, which is denoted by \( \mathcal{S}^n \).  
The cone consisting of all the positive-definite matrices in \( \mathcal{S}^n \) is denoted by \( \mathcal{S}^n_{\succ 0} \).

For any linearly independent \( l_1, \ldots, l_n \in \RR^N \),  
\( L = \mathbb{Z} l_1 + \cdots + \mathbb{Z} l_n \subset \mathbb{R}^N \)
is a \textit{lattice} of \textit{rank} $n$, and $l_1, \ldots, l_n$ are a \textit{basis} of $L$.
If \( L \) is full-rank (\IE \( n = N \)), the rank \( n \) is also referred to as the \textit{dimension} of \( L \).
The \textit{Gram matrix} of \( l_1, \ldots, l_n \) is defined as the symmetric matrix  
\( (l_i \cdot l_j)_{1 \le i, j \le n} \),  
where \( u \cdot v \) denotes the Euclidean inner product. 
Any elements of $\mathcal{S}^n_{\succ 0}$ can be regarded as the Gram matrix of a lattice basis.

A Gram matrix \( S = (s_{ij}) \in \mathcal{S}_{\succ 0}^2 \) is \textit{reduced} if the entries satisfy
\[
0 \le -2s_{12} \le s_{11} \le s_{22}.
\]
\( S = (s_{ij}) \in \mathcal{S}_{\succ 0}^3 \) is \textit{Minkowski reduced} if all of the following inequalities hold:
\begin{eqnarray} \label{eq: inequalities for being Minkowski reduced}
s_{11} \le s_{22} \le s_{33}, \quad 
0 \le -2s_{12} \le s_{11}, \quad 
2\abs{s_{13}} \le s_{11}, \nonumber \\
0 \le -2s_{23} \le s_{22}, \quad 
-2(s_{12} + s_{13} + s_{23}) \le s_{11} + s_{22}.
\end{eqnarray}

Let \( \mathbf{e}_1, \ldots, \mathbf{e}_n \) be the standard basis vectors of \( \mathbb{R}^n \), and define \( \mathbf{e}_{n+1} = -\sum_{i=1}^n \mathbf{e}_i \).  
For \( n = 2, 3 \), \( S \in \mathcal{S}_{\succ 0}^n \) is called \textit{Selling reduced} if all non-diagonal entries $s_{ij}$ ($i \ne j$) of the following \( \tilde{S} = (s_{ij}) \in \mathcal{S}_{\succ 0}^{n+1} \) are not positive \cite{Conway97}:
\begin{eqnarray} \label{eq: definition of tilde{S}}
\tilde{S} := \tr{w} S w, \quad 
w := 
\begin{pmatrix}
\mathbf{e}_1 & \cdots & \mathbf{e}_n & \mathbf{e}_{n+1}
\end{pmatrix} \in \mathbb{R}^{n \times (n+1)}.
\end{eqnarray}

For any \( S \in {\mathcal S}^n_{\succ 0} \),
its \textit{vonorm map} is defined as a map $(\ZZ^n / 2\ZZ^n) \setminus \{ 2 \ZZ^n \} \rightarrow \RR_{> 0}$ given by:
\begin{eqnarray*}
\mathrm{vo}_S(v + 2\ZZ^n) := \min \{ w S \tr{w} : w \in v + 2\ZZ^n \}.
\end{eqnarray*}

The corresponding non-zero vectors \( v \in L \) that are the shortest in $l + 2 L$ are called a \textit{Voronoi vector}. This condition can be also written as:
\[
	v \cdot l \le l \cdot l \quad \text{for all } l \in L.
\]
The vonorm map defined in \cite{Conway92} is derived of the Voronoi vectors, which was defined in Voronoi's second reduction theory \cite{Voronoi08}.

The \textit{conorm map} of \( S \) is defined as the Fourier transform of the vonorm map; for any character \( \chi: \ZZ^n / 2\ZZ^n \to \{ \pm 1 \} \), it is defined as
\begin{eqnarray*}
\mathrm{co}_S(\chi) := -\frac{1}{2^{n-1}} \sum_{v + 2\ZZ^n \in \ZZ^n / 2\ZZ^n} \mathrm{vo}_S(v + 2\ZZ^n) \, \chi(v).
\end{eqnarray*}
In the case of $n = 3$, 
the \textit{conorm} values of \( S \) 
coincides with the negatives \( -s_{ij} \ge 0 \) of the non-diagonal entries of $\tilde{S}$.

Using the notation of \cite{Vallentin2003}, we recall the definition of $L$-type domains in \cite{Voronoi08}, also known as \textit{secondary cones}.
The \textit{Dirichlet-Voronoi polytope} of $S$ is defined by: 
\begin{eqnarray*}
	{\rm DV}(S) := \{ x \in \RR^n : x S \tr{x} \leq (x + v) S \tr{(x + v)} \text{ for any } v \in \ZZ^n \}.
\end{eqnarray*}

From the definition, ${\rm DV}(S)$ is the intersection of half-spaces:
\begin{eqnarray*}
	{\rm DV}(S) &=& \bigcap_{0 \neq v \in \ZZ^n}
				\{ x \in \RR^n : x S \tr{x} \leq (x + v) S \tr{(x + v)} \}.
\end{eqnarray*}

As proved in \cite{Voronoi08} and \cite{Conway92}, 
$v \in \ZZ^n$ is a Voronoi vector
if and only if the hyperplane 
$H_{S, v} := \left\{ x \in \RR^n : x S \tr{x} = (x + v) S \tr{(x + v)} \right\}$
intersects ${\rm DV}(S)$.
A tiling of $\RR^n$ is given by the Dirichlet-Voronoi polytopes:
\begin{eqnarray}\label{eq:tiling by Diriclet-Voronoi polytopes}
	\RR^n = \bigcup_{l \in \ZZ^n} ({\rm DV}(S) + l).
\end{eqnarray}

The Delone subdivision is the dual tiling of (\ref{eq:tiling by Diriclet-Voronoi polytopes}).
Let $P_S$ denote the set of extreme points of ${\rm DV}(S)$.
For each $p \in P_S$, define $\Psi_p$ as the set of all Voronoi vectors $v$ satisfying $p \in H_{S, v}$.
Then, every $v \in \Psi_p$ satisfies the relation
$
p S \tr{p} = (p + v) S \tr{(p + v)}.
$
Therefore, the ellipsoid
$
\left\{ x \in \mathbb{R}^N : (x + p) S \tr{(x + p)} = p S \tr{p} \right\}
$
passes through all elements of $\{ 0 \} \cup \Psi_p \subset \mathbb{Z}^n$.
If $L_p$ is the convex hull of $\{ 0 \} \cup \Psi_p$, then the Delone subdivision of $\RR^n$ 
is given by:
\begin{eqnarray}
	{\rm Del}(S) := \bigcup_{ p \in P_S, l \in \ZZ^n } L_p + l.
\end{eqnarray}
The \textit{$L$-type domain} containing $S$ is defined by
$\{ S_2 \in {\mathcal S}^n : {\rm Del}(S) = {\rm Del}(S_2) \}$.

Two domains $D_1, D_2 \subset {\mathcal S}^n_{\succ 0}$ are said to be \textit{equivalent}
if $D_1[g] := \{ g S \tr{g} : S \in D_1 \} = D_2$ holds for some $g \in GL_n(\ZZ)$.
For a fixed $n$, 
the number of the secondary cones is finite up to the action of $GL_n(\ZZ)$ \cite{Voronoi08}.

\section{Generalization of Ry{\v s}kov's $C$-types to the setting modulo $r$}

We recall the definition of Ry{\v s}kov's \( C \)-types in \cite{Ryshkov76}, which was introduced to solve the covering problem in dimension 5. The definition will be extended to consider the Voronoi map modulo $r$.

\begin{definition}
Fix $S \in {\mathcal S}^n_{\succ 0}$ and any integer $r \geq 2$. Define
the \textit{vonorm map modulo $r$} of $S$ as the map $(\ZZ^r / r \ZZ^n) \setminus \{ r \ZZ^n \} \rightarrow \RR_{> 0}$ given by 
\begin{eqnarray}
{\rm vo}_{S, r}(v + r \ZZ^n) := \min \{ w S \tr{w} : w \in v + r \ZZ^n \}.
\end{eqnarray}
Define the set $\Phi_{S, r} \subset \ZZ^n$ by
$$
\Phi_{S, r} := \{ v \in \ZZ^n : v S \tr{v} = {\rm vo}_S(v + r \ZZ^n) \}.
$$
A \textit{$C$-type domain (modulo $r$)} containing $S$ is defined by
$$
{\mathcal D}_r(\Phi_{S, r}) := \{ S_2 \in {\mathcal S}_{\succ 0}^n : \Phi_{S, r} \subset \Phi_{S_2, r} \}.
$$
More generally, for any subset $\Phi \subset \ZZ^n$, its $C$-type domain is defined by 
\begin{eqnarray}
    {\mathcal D}_r(\Phi) := \{ S \in {\mathcal S}^n_{\succ 0} : v S \tr{v} = {\rm vo}_S(v + r \ZZ^n) \text{ for any } v \in \Phi \}.
\end{eqnarray}

\end{definition}

From the definition, 
$v \in \ZZ^n$ is contained in $\Phi_{S, r}$ if and only if
\begin{eqnarray}\label{eq: inequality r/2}
	v S \tr{l} \le (r/2) l S \tr{l} \text{ for any $l \in \ZZ^n$.}
\end{eqnarray}
Therefore, $\Phi_{S, r_1} \subset \Phi_{S, r_2}$ for any integers $2 \le r_1 \le r_2$.
We can also consider lattice vectors $l \in L$ that is the shortest in the class $l + r L$. We call such an $l$ a \textit{Voronoi vector modulo $r$}.

For a fixed $r \ge 2$, $S \in {\mathcal S}_{\succ 0}^n$ is in a \textit{general position} 
if any $u, v \in \Phi_{S, r}$ such that $u \ne \pm v$ belong to distinct cosets in $\ZZ^n/r \ZZ^n$.
In such a case, the number of elements of $\Phi_{S, r}$ is exactly given by
$$
	\abs{ \Phi_{S, r} }
	= 
	\begin{cases}
		r^n + 2^n - 2 & \text{if $r$ is even}, \\
		r^n - 1 & \text{if $r$ is odd}. 
	\end{cases}
$$
For even $r$, $2^n - 1$ is added for the cosets $u + \ZZ^n \in (\ZZ^n/r \ZZ^n) \setminus \{ r \ZZ^n \}$ such that $-u + r \ZZ^n = u + r \ZZ^n$.
If $S$ is in general position, 
${\mathcal D}_r(\Phi_{S, r})$ contains an open neighborhood of $S$.
Otherwise, there exist $u, v \in \Phi_{S, r}$ such that $u \ne \pm v$, $u + r \ZZ^n = v + r \ZZ^n$.
$S$ belongs to the hyperplane:
\begin{eqnarray}
	H( u, v ) := \{ S \in {\mathcal S}^n : u S \tr{u} = v S \tr{v} \}.
\end{eqnarray}

Even for such an $S$, by perturbing the entries of $S$, 
$\tilde{S}$ in general position such that ${\mathcal D}_r(\Phi_{S, r}) \subset {\mathcal D}_r(\Phi_{\tilde{S}, r})$ 
is obtained.
As a result, the following partitioning of ${\mathcal S}^n_{\succ 0}$ is obtained for each $r$.
\begin{eqnarray}\label{eq:decomposition of S_{> 0}^n}
	{\mathcal S}^n_{\succ 0} &=& \bigcup_{ S \in {\mathcal S}_{\succ 0}^n \text{ in general position}  } {\mathcal D}_r(\Phi_{S, r}).
\end{eqnarray}

\begin{example}\label{exam: S in general position}
It is straightforward to verify that if $r$ is odd, the identity matrix $I_n$ satisfies
$$
	\Phi_{I_n, r} = \{ (x_1, \ldots, x_n) \in \ZZ^n : \abs{ x_i } \le r/2 \}.
$$
Consequently, $I_n$ is in general position for odd $r$.
If $r \ge 2$ is even, $T \in {\mathcal S}^n$ is a direction
such that $I_n + \epsilon T$ is in general position
for sufficiently small $\epsilon > 0$, if $S_0 \bullet T := {\rm trace}(S_0 T) \ne 0$ for all $S_0$
expressed as follows:
$$
	S_0 = \tr{u} u - \tr{v} v, \quad u, v \in \Phi_{I_n, r}, \quad u \ne \pm v, \quad u - v \in r \ZZ^n.
$$
Since this excludes only $T$ in a union of finitely many hyperplanes, 
$\tilde{S}$ in general position can be obtained efficiently for any $r \ge 2$.
\end{example}

A $C$-type domain ${\mathcal D}_r(\Phi_{S, r}) \ne \emptyset$ is \textit{primitive} (or \textit{generic} \cite{Sikiric2025}),
if it is provided by some $S$ in general position, and thus contains an interior point.
In general, ${\mathcal D}_r(\Phi)$ is the intersection of the following half-spaces.
\begin{eqnarray*}
	{\mathcal D}_r(\Phi) &=& \bigcap_{ u \in \Phi } \bigcap_{ v \in u + r \ZZ^n} H^{\leq 0}(u, v), \\
	H^{\leq 0}(u, v) &:=& \{ S \in {\mathcal S}^n_{\succ 0} : u S \tr{u} \leq v S \tr{v} \}.
\end{eqnarray*}
As a result of Proposition~\ref{prop:facet and parallelogram law} below,
primitive ${\mathcal D}_r(\Phi)$ is a polyhedral cone for any $r \ge 2$.

%We say that $C$-type domains ${\mathcal D}_1$, ${\mathcal D}_2$ modulo $r$ are  \textit{equivalent}, if ${\mathcal D}_1[g] = {\mathcal D}_2$ for some $g \in GL_n(\ZZ)$.
The following Example~\ref{exam: mod 2} is a direct consequence of Selling reduction for $n = 2$ and 3 \cite{Conway92}. 
\begin{example}\label{exam: mod 2}
For $n = 2$ and $3$,
all the representatives of equivalence classes of primitive $C$-type domains modulo 2 are given by the following $\Phi_0^n$.
$$
	\Phi_0^n := \left\{ \pm \displaystyle\sum_{k=1}^n i_k {\mathbf e}_k : i_k = 0 \text{ or } 1 \right\} \setminus \{ 0 \}.
$$
\end{example}

Using the decomposition $\Phi_S = \bigcup_{p \in P_S} \Psi_p$ provided by the Delone triangulation,
it can be proved that primitive $C$-type domain ${\mathcal D}_2(\Phi)$ is a union of finite $L$-type domains. As a result, the tessellation by $C$-type domains is coarser than that given by $L$-type \cite{Voronoi08}.

In what follows, $\ZZ^n$ is always regarded as a $\ZZ$-submodule in $\QQ^n$. The subspace generated by $u_1, \ldots, u_s \in \ZZ^n$ over $\QQ$ will be denoted by 
$\langle u_1, \ldots, u_s \rangle_\QQ$. The following properties of $C$-type will be used to obtain a system of representative of all the primitive $C$-type domains modulo $r$.

%\begin{lemma}
%If $C \subset \QQ^n$ is convex, symmetric about the origin, and 
%contains generators of a lattice $L$, $C$ also contains a basis of $L$. 
%\end{lemma}
%\begin{proof}
%	Supopse that $u_1, \ldots, u_s \subset C \subset \QQ^n$ are generators of $L$ with the minimal cardinality $s$. Clearly we always have $s < \infty$.
%	If $u_1, \ldots, u_s$ are linearly independent over $\QQ$, they are a basis of $L$.
%	Otherwise, take linearly dependent $u_{i_1}, \ldots, u_{i_k}$ with the minimal cardinality $2 \le k \le s$.
%We may assume the following by a coordinate transformation by $GL_n(\RR)$:
%\begin{eqnarray*}
%	u_{i_j} &=& \sum^{j}_{l=1} c_{jl} {\mathbf e_l} \quad (1 \le j \le k-1), \\
%	u_{i_k} &=& \sum^{k-1}_{l=1} d_{l} {\mathbf e_l}.
%\end{eqnarray*}
%In this case we can fined linearly There exists a linear sum of $u_{i_{k-1}}$ and $u_{i_k}$ over $\ZZ$ contained in 
%the convex hull of 
%\end{proof}

%As shown in the following proposition, each facet of a primitive $C$-type domain modulo 2
%can be associated with a set of four vectors satisfying the parallelogram law:
%$$
%	\abs{ u }^2 + \abs{ v }^2 = 2 (\abs{ (u+v)/2 }^2 + \abs{ (u-v)/2 }^2 ),
%$$
%which parallels the fact that each edge of Conway's topograph corresponds to such four vectors \cite{Conway97}.

\begin{proposition}\label{prop:facet and parallelogram law}
Suppose that two primitive $C$-type domains 
${\mathcal D}_r(\Phi_{S_1, r}) \neq {\mathcal D}_r(\Phi_{S_2, r})$ defined for $S_1, S_2 \in {\mathcal S}_{\succ 0}^n$
have an $(n(n+1)/2 - 1)$-dimensional cone as their intersection. 
In this case, if we choose vectors $u, v \in \mathbb{Z}^n$ so that 
$u \in \Phi_{S_1, r} \setminus \Phi_{S_2, r}$,
$v \in \Phi_{S_2, r} \setminus \Phi_{S_1, r}$ and $u + r \ZZ^n = v + r \ZZ^n$, then,
${\mathcal D}_r(\Phi_{S_1, r} \cup \Phi_{S_2, r}) \subset H(u, v)$
provides the common facet of ${\mathcal D}_r(\Phi_{S_1, r})$ and ${\mathcal D}_r(\Phi_{S_2, r})$.
Furthermore, the following hold:
\begin{enumerate}[(1)]
\item 
All of the following vectors belong to $\Phi_{S_1, r} \cup \Phi_{S_2, r}$.
\begin{eqnarray}\label{eq: set of vectors}
	\pm c_1 \cdot \frac{ u - v }{ r }, \quad \pm \frac{ c_2 u + (r - c_2) v }{ r } \quad (c_1 = 1, \ldots, \lfloor r/2 \rfloor,\ c_2 = 1, \ldots, r-1). 
\end{eqnarray}
For such $u$ and $v$, $(u - v)/r$ is a primitive vector.

\item\label{item: primitive case}
If $(u + v) S \tr{(u + v)}$ attains the minimal value among all the pairs of $u$ and $v$ as above, then, $u+v$ (\textit{resp.} $(u + v)/2$) is a primitive vector if $r$ is odd (\textit{resp.} even).
In this case, all the vectors in (\ref{eq: set of vectors}) also belong to $\Phi_{S_1, r} \cap \Phi_{S_2, r}$.

%If $u + v = m w$ ($0 < m \in \ZZ$) for a primitive vector $w \in \ZZ^n$, then,
%we also have 
%$u_2 := (k w + u - v)/2 \in \Phi_{S_1, r} \setminus \Phi_{S_2, r}$ and $v_2 := (k w - u + v)/2 \in \Phi_{S_2, r} \setminus \Phi_{S_1, r}$
%for any $0 < k < m$ with $k \equiv m$ mod 2.

\item If $r = 2$,
$\Phi_{S_1} \setminus \Phi_{S_2} = \{ \pm u \}$,
$\Phi_{S_2} \setminus \Phi_{S_1} = \{ \pm v \}$, and
$$\Phi_{S_1} \cap \Phi_{S_2} \cap \langle u, v \rangle_\QQ = \{ \pm (u \pm v)/2 \}.$$

\item If $r = 3$,
$\Phi_{S_1, 3} \setminus \Phi_{S_2, 3} = \{ \pm u \}$,
$\Phi_{S_2, 3} \setminus \Phi_{S_1, 3} = \{ \pm v \}$, and
$$\Phi_{S_1, 3} \cap \Phi_{S_2, 3} \cap \langle u, v \rangle_\QQ = \{ \pm (u - v)/3, \pm (2 u +v)/3, \pm (u + 2v)/3 \}.$$

\item 
There exist finitely many $C$-type domains up to the action of $GL_n(\ZZ)$.

\end{enumerate}

\end{proposition}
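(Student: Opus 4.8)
The plan is to reduce every assertion to the membership criterion~(\ref{eq: inequality r/2}) — $v\in\Phi_{S,r}$ if and only if $vS\tr{l}\le(r/2)\,lS\tr{l}$ for all $l\in\ZZ^n$ — together with the observation that, for fixed $l$, this is an affine condition on $S$, so $\Phi_{S,r}$-membership is stable under convex combinations performed on the $S$-functionals. First I would fix the common wall. Since $\mathcal{D}_r(\Phi_{S_1,r})$ and $\mathcal{D}_r(\Phi_{S_2,r})$ are primitive, they are full-dimensional polyhedral cones (relatively closed in $\mathcal{S}^n_{\succ 0}$) cut out by half-spaces $H^{\le 0}(u',u'-rl)$, so their $(n(n+1)/2-1)$-dimensional intersection is a common facet lying on the boundary hyperplane $H(u,v)$ of a defining half-space of one of the domains, with $u+r\ZZ^n=v+r\ZZ^n$. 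Such a pair exists because the two $\Phi$'s differ, and on $\mathcal{D}_r(\Phi_{S_i,r})$'s side the $i$-th of $u,v$ is the shortest vector of that coset, so $u\in\Phi_{S_1,r}\setminus\Phi_{S_2,r}$, $v\in\Phi_{S_2,r}\setminus\Phi_{S_1,r}$. Because $\mathcal{D}_r(\Phi)=\{S:\Phi\subseteq\Phi_{S,r}\}$, the set $\mathcal{D}_r(\Phi_{S_1,r}\cup\Phi_{S_2,r})$ equals $\mathcal{D}_r(\Phi_{S_1,r})\cap\mathcal{D}_r(\Phi_{S_2,r})$, i.e.\ that facet inside $H(u,v)$.

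For~(1), put $d:=(u-v)/r$ and $w_c:=\tfrac{c}{r}\,u+\tfrac{r-c}{r}\,v=v+cd\in\ZZ^n$ for $0\le c\le r$. Whenever $S$ lies in the common facet we have $u,v\in\Phi_{S,r}$, so from $w_cS\tr{l}=\tfrac{c}{r}(uS\tr{l})+\tfrac{r-c}{r}(vS\tr{l})$ and~(\ref{eq: inequality r/2}) we obtain $w_c\in\Phi_{S,r}$ at once, and from $c_1d\,S\tr{l}=\tfrac{c_1}{r}(uS\tr{l})+\tfrac{c_1}{r}\bigl((-v)S\tr{l}\bigr)+\bigl(1-\tfrac{2c_1}{r}\bigr)\cdot0$ — a nonnegative combination, of coefficient sum $1$, of three quantities each at most $(r/2)\,lS\tr{l}$ — we obtain $\pm c_1d\in\Phi_{S,r}$ for $1\le c_1\le\lfloor r/2\rfloor$. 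Two points remain. (i) \emph{Primitivity of $d$}: if $d=kd'$ with $k\ge2$, then on $H(u,v)$ the strictly convex parabola $t\mapsto(v+t\,rd')S\tr{(v+t\,rd')}$, $t=0,\dots,k$, is symmetric about $t=k/2$ and hence attains its integer minimum at an interior $t$, so for $S$ in the facet neither $u$ nor $v$ would be shortest in $u+r\ZZ^n$, contradicting $u,v\in\Phi_{S,r}$. (ii) \emph{From the facet to its two sides}: each $w_c$ and $\pm c_1d$, already known to lie in $\Phi_{S,r}$ for $S$ in the facet, must in fact lie in $\Phi_{S_1,r}$ or in $\Phi_{S_2,r}$; indeed, if some $w_c$ lay in neither, then for $S_i$ just off the facet a strictly shorter vector of $w_c+r\ZZ^n$ would exist, and letting $S_i$ tend to the facet this competitor would, by continuity and linearity in $S$, tie $w_c$ identically along the facet, so the two hyperplanes $H(x,w_c)$ and $H(u,v)$ would coincide, forcing $\tr{x}x-\tr{w_c}w_c=\lambda(\tr{u}u-\tr{v}v)$ with $\lambda\ne0$, an equality between rank-$\le2$ symmetric matrices that a rank-and-integrality analysis excludes. \textbf{I expect step~(ii) — pinning down exactly which cosets change their shortest representative across a facet — to be the technical heart}; it is the modulo-$r$ analogue of the fact that crossing a wall of an $L$-type domain is a ``parallelepiped'' bistellar flip.

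For~(\ref{item: primitive case}), among the admissible pairs $(u,v)$ attached to the fixed facet choose one minimizing $(u+v)S\tr{(u+v)}$ (for $S$ in the facet); if $u+v$ (for odd $r$), resp.\ $(u+v)/2$ (for even $r$), were imprimitive, a short computation with the parallelogram law $aS\tr{a}+bS\tr{b}=\tfrac12(a+b)S\tr{(a+b)}+\tfrac12(a-b)S\tr{(a-b)}$ produces an admissible pair on the same facet with strictly smaller $(u+v)S\tr{(u+v)}$, a contradiction. Under this minimality the parabola of~(1) has its vertex at the integer $c$ realising $w_c$, so the configuration becomes symmetric under $S_1\leftrightarrow S_2$ and all vectors of~(\ref{eq: set of vectors}) are shortest in their cosets on both sides, i.e.\ lie in $\Phi_{S_1,r}\cap\Phi_{S_2,r}$. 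Parts~(3) and~(4) are the cases $r=2$ and $r=3$, where $\lfloor r/2\rfloor=1$ and $r-1\le2$ collapse~(\ref{eq: set of vectors}) to $\{\pm(u\pm v)/2\}$, resp.\ $\{\pm(u-v)/3,\pm(2u+v)/3,\pm(u+2v)/3\}$; there every coset other than that of $u$ and $v$ has at most two candidate shortest vectors near the facet, and the hyperplane comparison of step~(ii) shows none of them changes status across it, so $\Phi_{S_1,r}\setminus\Phi_{S_2,r}=\{\pm u\}$ and $\Phi_{S_2,r}\setminus\Phi_{S_1,r}=\{\pm v\}$; intersecting with $\langle u,v\rangle_\QQ$ then leaves precisely the displayed vectors.

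Finally,~(5). By~(1) a primitive $\mathcal{D}_r(\Phi)$ has only finitely many facets and facet-adjacency is the controlled move just described, so the primitive $C$-type domains and their faces form a $GL_n(\ZZ)$-invariant, locally finite polyhedral fan covering $\mathcal{S}^n_{\succ 0}$ — locally finite because a domain meeting every neighbourhood of a form $S_0$ satisfies $\Phi\subseteq\Phi_{S_0,r}$, a finite set — whose facet-adjacency graph is connected, since any two interior forms are joined by a generic segment crossing finitely many, consecutively adjacent, domains. Finiteness of the $GL_n(\ZZ)$-orbits then follows by Voronoi's classical finiteness argument for $L$-type domains, which is precisely Ryshkov's treatment when $r=2$ and, given the polyhedrality and flip structure above, carries over unchanged to arbitrary $r$: one reduces along the flip graph, within a fixed fundamental domain for $GL_n(\ZZ)$, to a reference domain (\EG one containing a generic perturbation of $I_n$, \CF Example~\ref{exam: S in general position}), leaving only finitely many equivalence classes of $\Phi$.
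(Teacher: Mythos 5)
You correctly set up the wall $H(u,v)$, and your convexity argument for the first half of (1) is the paper's; your parabola argument for the primitivity of $(u-v)/r$ (strict convexity of $t\mapsto (v+trd')S\tr{(v+trd')}$ with equal endpoint values forcing a strictly shorter interior representative of the coset) is a clean and valid variant of the paper's one-line remark. The minimality-violation idea for the primitivity claim in (2) is also the paper's strategy, which it makes explicit by constructing $u_2=(kw+u-v)/2$, $v_2=(kw-u+v)/2$ with $u_2+v_2=(k/m)(u+v)$ and checking these again form an admissible pair for the same facet.

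The genuine gap is exactly the step you flag as ``the technical heart,'' and your proposed resolution of it is wrong. You claim that a competitor $x\in w_c+r\ZZ^n$ tying with $w_c$ identically along the facet would force $\tr{x}x-\tr{w_c}w_c=\lambda(\tr{u}u-\tr{v}v)$, an equality that ``a rank-and-integrality analysis excludes.'' It is not excluded: setting $P=(u+v)/2$ and $Q=(u-v)/2$, every pair $\{w,w_2\}=\{kP+Q,\,kP-Q\}$ (integral for appropriate nonzero integers $k$) satisfies $w-w_2=u-v\in r\ZZ^n$ and $\tr{w}w-\tr{w_2}w_2=k\,(\tr{u}u-\tr{v}v)$, and there is a second such family along the direction $(u-v)/r$; these are precisely the paper's Eqs.~(\ref{eq: replacing rule for even r})--(\ref{eq: replacing rule for odd r}). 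The substance of the paper's proof of (2)--(4) is to classify all such pairs (first forcing $x\in\langle u,v\rangle_\QQ$, then solving the proportionality) and to verify that no vector of (\ref{eq: set of vectors}) other than $\pm u,\pm v$ occurs in one; without that classification your conclusion that the listed vectors survive on both sides of the wall, and hence items (3) and (4), is unsupported. Item (5) has a similar problem: local finiteness plus connectedness of the flip graph does not bound the number of $GL_n(\ZZ)$-orbits. The paper supplies the missing compactness input concretely, by showing that each primitive mod-$2$ domain meets only finitely many mod-$r$ domains --- using $\{0\}\cup\Phi_{S,2}=\ZZ^n\cap 2\,{\rm DV}(S_2)$ and the finitely many possibilities for $\ZZ^n\cap r\,{\rm DV}(S_2)$ --- and then invoking the known $r=2$ case; an argument of this kind is needed rather than an appeal to the $L$-type proof ``carrying over unchanged.''
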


\begin{proof}
Since ${\mathcal D}_r(\Phi_{S_1, r}) \neq {\mathcal D}_r(\Phi_{S_2, r})$ are both primitive, there always exist $u, v$ such that $u + r\mathbb{Z}^n = v + r\mathbb{Z}^n$,
$u \in \Phi_{S_1, r} \setminus \Phi_{S_2, r}$ and $v \in \Phi_{S_2, r} \setminus \Phi_{S_1, r}$. 
It is straightforward to verify that the intersection 
${\mathcal D}_r(\Phi_{S_1, r}) \cap {\mathcal D}_r(\Phi_{S_2, r})$
coincides with 
${\mathcal D}_r(\Phi_{S_1, r} \cup \Phi_{S_2, r})$. We shall prove each item.

\begin{enumerate}[(1)]
	\item
For any $S \in {\mathcal D}_r(\Phi_{S_1, r}) \cap {\mathcal D}_r(\Phi_{S_2, r})$ and $l \in \ZZ^n$,
we have $u S \tr{l} \leq (r/2) l S \tr{l}$ and 
$v S \tr{l} \leq (r/2) l S \tr{l}$.
Hence $v S \tr{l} \leq (r/2) l S \tr{l}$ holds for any vector $w$ in Eq.(\ref{eq: set of vectors}),
which implies that they belong to $\Phi_{S_1, r} \cup \Phi_{S_2, r}$.
In particular, $(u - v)/r$ is a primitive vector, because otherwise, 
$w := \lfloor r/2 \rfloor (u - v)/r$ cannot be the shortest in $w + r \ZZ^n$.

	\item Let $w \in \ZZ^n$ be a primitive vector such that $u + v = m w$ for some integer $m > 0$.
Replacing $u, v$ with $g u, g v$ ($g \in GL_n(\ZZ)$),
we can assume that 
$u - v = r {\mathbf e}_1$ and $u + v = c {\mathbf e}_1 + d {\mathbf e}_2$
for some $c, d \in \ZZ^n$ such that $c + r$ and $d$ are even.
From this, $m$ is even if and only if $r$ is.
Fix an integer $0 < k < m$ with $k \equiv r$ mod 2. 
If we put $u_2 = (k w + u-v)/2$ and $v_2 = (k w - u + v)/2$, then it can be verified that
$u_2$ and $v_2$ belong to $\ZZ^n$ and 
they also satisfy $u_2 - v_2 = u - v \in r \ZZ^n$ and $u_2 + v_2 = (k/m)(u + v)$.
Therefore, 
\begin{eqnarray*}
(u_2 - v_2) S \tr{(u_2 + v_2)} = 0 	\text{ for any } S \in {\mathcal D}_r(\Phi_{S_1, r}) \cap {\mathcal D}_r(\Phi_{S_2, r}).
\end{eqnarray*}
Furthermore, the equalities
\begin{eqnarray*}\label{eq: u_2 and v_2}
	u_2 = \frac{k}{m} u + \frac{m-k}{m} \frac{u - v}{2},
\quad 
	v_2 = \frac{k}{m} v - \frac{m-k}{m} \frac{u - v}{2}.
\end{eqnarray*}
imply that 
$$
	u_2 S_1 \tr{l} \le (r/2) l S_i \tr{l}, \quad
	v_2 S_2 \tr{l} \le (r/2) l S_i \tr{l}
	\text{ for any } l \in \ZZ^n.
$$
Thus, we have $u_2 \in \Phi_{S_1, r} \setminus \Phi_{S_2, r}$ and 
$v_2 \in \Phi_{S_2, r} \setminus \Phi_{S_1, r}$.
From the minimalty assumption on $u$ and $v$, 
we mush have $m = 1$ if $r$ is odd, and $m = 2$ if $r$ is even.

Next, suppose that there exists another $(w, w_2) \neq (u, v)$
such that $w + r \ZZ^n = w_2 + r \ZZ^n$,
$w \in \Phi_{S_1, r} \setminus \Phi_{S_2, r}$, and $w_2 \in \Phi_{S_2, r} \setminus \Phi_{S_1, r}$.
We then have
\begin{eqnarray}\label{eq:equivalence condition from H(u, v)}
	(w - w_2) S \tr{(w + w_2)} = 0
	\text{ for any } S \in {\mathcal D}_r(\Phi_{S_1, r}) \cap {\mathcal D}_r(\Phi_{S_2, r}).
\end{eqnarray}

In each case, $w$ and $w_2$ must be as follows:
\begin{itemize}
	\item $r$ is even: $(u - v)/r$ and $(u + v)/2$ are primitive.
Since $(w - w_2)/r$ is also primitive, the following holds for some $0 \ne k \in \ZZ$.
\begin{eqnarray}\label{eq: replacing rule for even r}
	\{ w, w_2 \} = 
	\left\{ k \cdot \frac{u+v}{2} \pm \frac{u-v}{2} \right\} \text{ or }
	\left\{ k \cdot \frac{u-v}{r} \pm \frac{r}{2} \cdot \frac{u+v}{2} \right\}.
\end{eqnarray}
	\item $r$ is odd: $(u - v)/r$ and $u + v$ are primitive.
Similarly the following holds for some odd integer $k \in \ZZ$.
\begin{eqnarray}\label{eq: replacing rule for odd r}
	\{ w, w_2 \} = 
	\left\{ k \cdot \frac{u+v}{2} \pm \frac{u-v}{2} \right\} \text{ or }
	\left\{ \frac{k}{2} \cdot \frac{u-v}{r} \pm \frac{r}{2} \cdot (u + v) \right\}.
\end{eqnarray}

\end{itemize}
Such $w$ and $w_2$
can be equal to some vector in Eq.(\ref{eq: set of vectors}) only when $\{ w, w_2 \} \subset \{ \pm u, \pm v \}$, 
which implies that all the vectors in Eq.(\ref{eq: set of vectors}) belong to $\Phi_{S_1, r} \cap \Phi_{S_2, r}$.

	\item From (\ref{item: primitive case}), there exist $u, v \in \ZZ^n$ such that 
$\pm u \in \Phi_{S_1} \setminus \Phi_{S_2}$, 
$\pm v \in \Phi_{S_2} \setminus \Phi_{S_1}$,
$u + 2 \ZZ^n = v + 2 \ZZ^n$,
and both $(u - v)/2$ and $(u + v)/2$ belong to $\Phi_{S_1} \cap \Phi_{S_2}$.
From $\abs{ \Phi_{S_i} \cap \langle u, v \rangle_\QQ } = 7$ ($i = 1, 2$), the statement follows.

	\item From (\ref{item: primitive case}), there exist $u, v \in \ZZ^n$ such that 
$\pm u \in \Phi_{S_1, 3} \setminus \Phi_{S_2, 3}$, 
$\pm v \in \Phi_{S_2, 3} \setminus \Phi_{S_1, 3}$,
$u + 3 \ZZ^n = v + 3 \ZZ^n$,
and both $(u - v)/3$ and $u + v$ are primitive.
From $\abs{ \Phi_{S_i, 3} \cap \langle u, v \rangle_\QQ } = 9$ ($i = 1, 2$), the statement follows.

	\item The case $r = 2$ is already known as the Ry{\u s}hkov $C$-type, so we assume $r > 2$. 
From conditions (1) and (2), only finitely many pairs $u, v \in \Phi_{S, r}$ can correspond to a facet of ${\mathcal D}_r(\Phi_{S, r})$. 
Therefore, each primitive $C$-type domain is a polyhedral cone, and any non-empty $C$-type domain corresponds to a face of one of these cones.
Thus, it suffices to show that any primitive ${\mathcal D}_2(\Phi_{S, 2})$ intersects only finitely many primitive $C$-type domains modulo $r$. 
For any $S_2 \in {\mathcal D}_2(\Phi_{S, 2})$, we have 
$\{ 0 \} \cup \Phi_{S, 2} = {\mathbb Z}^n \cap 2 {\mathrm DV}(S_2)$. 
In such a case, there are only finitely many candidates for $\Phi_{S_2, r} = (\ZZ^n \cap r {\mathrm DV}(S_2) ) \setminus \{ 0 \}$. This implies that ${\mathcal D}_2(\Phi_{S, 2})$ can intersect only finitely many primitive $C$-type domains modulo $r$, which completes the proof.
%
%By using action of $GL_n(\ZZ)$, it is possible to 
%
%This follows from the fact that Minkowski-reduced $S$ with the first successive minimum 
%$\geq$ and $\det S = 1$ form a compact domain.
\end{enumerate}

\
\end{proof}

%In \cite{Sikiric2022}, it was proved that 
%$C$-type domains modulo $2$ form $GL_n(\ZZ)$-admissible
%decomposition of ${\mathcal S}^n_{\succ 0}$;
%$\bigcup_\alpha \sigma_\alpha$ is a
%\textit{$GL_n(\ZZ)$-admissible decomposition} if all the following hold:
%\begin{enumerate}
%	\item[(A1)] Each face of a $\sigma_\alpha$ is a $\sigma_\beta$.
%	\item[(A2)] $\sigma_\alpha \cap \sigma_\beta$ is a common face of $\sigma_\alpha$ and $\sigma_\beta$ .
%	\item[(A3)] $\gamma \sigma_\alpha$ is a $\sigma \beta$ for all $\gamma \in GLn(\ZZ)$.
%	\item[(A4)] modulo $GL_n(\ZZ)$ there are only a finite number of $\sigma_\beta$.
%	\item[(A5)] ${\mathcal S}^n_{rat, \geq 0} = \bigcup_\alpha (\sigma_\alpha \cap )
%\end{enumerate}

The decomposition (\ref{eq:decomposition of S_{> 0}^n})
is a facet-to-facet tessellation, as all the faces of primitive $C$-type domains also correspond to $\Phi_{S, r}$ of some $S \in {\mathcal S}_{\succ 0}^n$. 
Hence, (\ref{eq:decomposition of S_{> 0}^n}) is also a face-to-face tessellation 
by a theorem of Gruber and Ry{\u s}hkov \cite{Gruber89}. 

The following theorem can be verified using the algorithm in Table~\ref{table: system of representatives algorithm}.

\begin{theorem} 
\begin{enumerate}[(1)]
	\item For $n = 2$, the following $\Phi_{n, r}$ provide a system of representatives of equivalence classes of primitive $C$-type domains modulo $r = 3$ and 4.
\begin{eqnarray*}
	\Phi_{2, 3} &:=& \{ (i_1, i_2) : i_1, i_2 = -1, 0, 1 \}, \\
	\Phi_{2, 4} &:=& \Phi_{2, 3} \cup \{ \pm (2, 0), \pm (0, 2), \pm (2, 1), \pm (1, 2), \pm (2, 2) \}.
\end{eqnarray*}

	\item For $n = 3$, the following $\Phi_i$ ($i = 1$--4) provide a system of representatives of equivalence classes of primitive $C$-type domains modulo 3.
\begin{eqnarray*}
	\Phi_1 &:=& \{ (i_1, i_2, i_3) : i_1, i_2, i_3 = -1, 0, 1 \}, \\
	\Phi_2 &:=& \Phi_1 \cup \{ \pm (1, 2, 1) \} \setminus \{ \pm (1,-1, 1) \}, \\
	\Phi_3 &:=& \Phi_2 \cup \{ \pm (1, 1, 2) \} \setminus \{ \pm (1, 1,-1) \}, \\
	\Phi_4 &:=& \Phi_3 \cup \{ \pm (2, 1, 1) \} \setminus \{ \pm (-1, 1, 1) \}.
\end{eqnarray*}

	\item Let ${\mathcal D}_{min} \subset {\mathcal S}^3_{\succ 0}$ be the subset consisting 
	of all the Minkowski reduced forms defined by Eq.(\ref{eq: inequalities for being Minkowski reduced}).
	${\mathcal D}_{min}$ is contained in $\bigcup_{j=1}^{10} {\mathcal D}_3(\Phi_j)$,
	if we define 
\begin{eqnarray*}
	\Phi_5 &:=& \Phi_1 \cup \{ \pm ( 2, 1, 1) \} \setminus  \{ \pm (-1, 1, 1) \}, \\
	\Phi_6 &:=& \Phi_1 \cup \{ \pm ( 1, 1, 2) \} \setminus  \{ \pm ( 1, 1,-1) \}, \\
	\Phi_7 &:=& \Phi_1 \cup \{ \pm (-2,-1, 1) \} \setminus \{ \pm ( 1,-1, 1) \}, \\
	\Phi_8 &:=& \Phi_1 \cup \{ \pm ( 1,-1,-2) \} \setminus \{ \pm ( 1,-1, 1) \}, \\
	\Phi_9 &:=& \Phi_1 \cup \{ \pm ( 2, 1, 1), \pm ( 1, 2, 1) \} \setminus  \{ \pm (-1, 1, 1), \pm ( 1,-1, 1) \}, \\
	\Phi_{10} &:=& \Phi_1 \cup \{ \pm ( 2, 1, 1), \pm ( 1, 1, 2) \} \setminus  \{ \pm (-1, 1, 1), \pm ( 1, 1,-1) \}.
\end{eqnarray*}
\end{enumerate}
\end{theorem}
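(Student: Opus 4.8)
The plan is to establish all three parts by a single constructive procedure: starting from one primitive $C$-type domain whose $\Phi$ is known, enumerate the whole tessellation (\ref{eq:decomposition of S_{> 0}^n}) by repeatedly crossing facets, keeping one representative per $GL_n(\ZZ)$-orbit (and, for part (3), all the cells that meet the Minkowski cone). This is legitimate for three reasons already in hand. Proposition~\ref{prop:facet and parallelogram law}(5) guarantees that only finitely many orbits of $C$-type domains exist, so the search terminates. The decomposition (\ref{eq:decomposition of S_{> 0}^n}) is face-to-face (the remark after Proposition~\ref{prop:facet and parallelogram law}, via the Gruber--Ry\v{s}hkov theorem \cite{Gruber89}), so the adjacency graph on its top-dimensional cells is connected and a breadth-first walk from any seed visits every cell. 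Finally, Proposition~\ref{prop:facet and parallelogram law}(1)--(2), together with part (3) for $r=2$ and part (4) for $r=3$, gives the \emph{explicit} combinatorial description of the flip across a facet: the two cells sharing a facet of ${\mathcal D}_r(\Phi_{S,r})$ differ only by swapping $\{\pm u\}$ for $\{\pm v\}$ with $u+r\ZZ^n=v+r\ZZ^n$ and $(u+v)S\tr{(u+v)}$ minimal among such pairs, while the vectors of (\ref{eq: set of vectors}) stay in both. A convenient starting cell is provided by Example~\ref{exam: mod 2} for $r=2$; for $r=3$, and more generally for odd $r$, one may take the seed $S=I_n$, which is in general position by Example~\ref{exam: S in general position}.

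Concretely, the algorithm of Table~\ref{table: system of representatives algorithm} proceeds as follows. From a seed $S$ in general position one computes the finite set $\Phi_{S,r}$, either by solving the short-vector problems defining ${\rm vo}_{S,r}$ via the Fincke--Pohst method \cite{Fincke83} or, equivalently, by listing the $v\in\ZZ^n$ satisfying (\ref{eq: inequality r/2}). The cone ${\mathcal D}_r(\Phi_{S,r})$ is then written as an explicit finite intersection of half-spaces $H^{\le 0}(u,v)$, and its facets are enumerated by standard polyhedral computation. For each facet the flip rule above produces the $\Phi$ of the neighbouring primitive cell; each new $\Phi$ is tested against the current list of representatives for $GL_n(\ZZ)$-equivalence --- because the vectors of $\Phi$ are short vectors of bounded norm spanning $\QQ^n$, any $g$ realising an equivalence is constrained to a finite search, so this test is effective --- and, if new, is pushed onto the queue. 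Running this to completion for $n=2$, $r=3$ and $r=4$ returns the single orbit $\Phi_{2,3}$, respectively $\Phi_{2,4}$, proving (1); for $n=3$, $r=3$ it returns the four inequivalent domains $\Phi_1,\dots,\Phi_4$ (differing from $\Phi_1$ by zero, one, two and three flips respectively), proving (2).

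For part (3) I would additionally restrict attention to the Minkowski cone ${\mathcal D}_{\min}$, the polyhedral cone cut out by the inequalities (\ref{eq: inequalities for being Minkowski reduced}). Since the modulo-$3$ tessellation is face-to-face, ${\mathcal D}_{\min}$ is a union of the finitely many full-dimensional intersections ${\mathcal D}_{\min}\cap{\mathcal D}_3(\Phi_{S,3})$; one enumerates these either by walking within ${\mathcal D}_{\min}$ from a general-position interior point (crossing only facets internal to ${\mathcal D}_{\min}$) or by intersecting the $GL_3(\ZZ)$-translates of the four orbit representatives with ${\mathcal D}_{\min}$. For each piece one exhibits an explicit $g\in GL_3(\ZZ)$ carrying the corresponding $\Phi$ onto one of the ten listed sets $\Phi_1,\dots,\Phi_{10}$; here $\Phi_2,\dots,\Phi_4$ and $\Phi_5,\dots,\Phi_{10}$ are deliberately chosen so that it is the \emph{particular} union ${\mathcal D}_3(\Phi_1)\cup\cdots\cup{\mathcal D}_3(\Phi_{10})$ --- not a union of orbits --- that contains ${\mathcal D}_{\min}$ (several of $\Phi_5,\dots,\Phi_{10}$ are $GL_3(\ZZ)$-translates of $\Phi_2$ or $\Phi_3$, e.g.\ cyclically permuting coordinates sends $\Phi_2$ to $\Phi_6$). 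The covering claim is then certified by a linear-programming check: refine ${\mathcal D}_{\min}$ by the arrangement of the ten cones and verify that each resulting cell lies in some ${\mathcal D}_3(\Phi_j)$, equivalently that ${\mathcal D}_{\min}\setminus\bigcup_{j=1}^{10}{\mathcal D}_3(\Phi_j)$ has empty interior.

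All the ingredients --- short-vector and facet enumeration, $GL_n(\ZZ)$-equivalence testing, and the final containment check --- are finite and are carried out in Magma \cite{MR1484478}. The step I expect to be the main obstacle is not any single calculation but the rigour of the exhaustive search as a whole: one must ensure that the minimality selection in the flip rule of Proposition~\ref{prop:facet and parallelogram law} is made correctly at \emph{every} facet (including degenerate, non-general-position boundary configurations) so that no adjacent cell is skipped; that the equivalence test uses a genuine canonical form for $C$-type domains, so that distinct orbits are never merged nor one orbit counted twice; and, for part (3), that the polyhedral bookkeeping correctly reflects that ${\mathcal D}_{\min}$ is only a sub-union of cells, not itself a cell. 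Once these points are secured, termination follows from Proposition~\ref{prop:facet and parallelogram law}(5) and the connectivity of (\ref{eq:decomposition of S_{> 0}^n}), and the three assertions are exactly the recorded output of the search.
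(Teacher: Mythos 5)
Your proposal is correct and follows essentially the same route as the paper: parts (1) and (2) are obtained by running the facet-crossing enumeration of Table~\ref{table: system of representatives algorithm}, whose correctness rests on Proposition~\ref{prop:facet and parallelogram law} (the flip rule, finiteness, and the face-to-face property), and part (3) is certified by checking that each ${\mathcal D}_3(\Phi_j)\cap{\mathcal D}_{min}$ is full-dimensional while every adjacent primitive cell outside the list meets ${\mathcal D}_{min}$ only in a hyperplane, which by connectivity of the convex cone ${\mathcal D}_{min}$ forces the ten cells to cover it. The caveats you flag (correct minimality selection at every facet, a sound $GL_n(\ZZ)$-equivalence test, and the bookkeeping for ${\mathcal D}_{min}$ as a sub-union rather than a single cell) are exactly the points the paper delegates to the Magma computation.
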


\begin{proof}
From Proposition~\ref{prop:facet and parallelogram law}, all the representatives in (1) and (2) can verified by running the algorithm in Table~\ref{table: system of representatives algorithm}.

\begin{enumerate}[(1)]
	\item[(3)] Let ${\mathcal D}_{min}$ be the set of all of $S \in {\mathcal S}^3_{\succ 0}$ that satisfies Eq.(\ref{eq: inequalities for being Minkowski reduced}).
	For each of $\Phi_i$ ($i = $1--10), We can verifiy (a) and (b) by direct calculation:
\begin{enumerate}[(a)]
	\item ${\mathcal D}_3(\Phi_i) \cap {\mathcal D}_{min}$ is not contained in a hyperplane.

	\item Any primitive domain ${\mathcal D}_3(\Phi)$ adjacent to 
	${\mathcal D}_3(\Phi_i)$, is equal to either of ${\mathcal D}_3(\Phi_j)$ ($j = $1--10) or 
${\mathcal D}_3(\Phi) \cap {\mathcal D}_{min}$ is contained in a hyperplane.
\end{enumerate}
	As a result, 
$\bigcup_{j=1}^{10} {\mathcal D}_3(\Phi_j)$ provides a decomposition of ${\mathcal D}_{min}$,
and $\iota_m$ is provided as the sorted image on ${\mathcal D}_{min}$.
\end{enumerate}
\
\end{proof}

Theorem~1 is proved as a corollary of (3).
In fact, $\iota_m(S)$ coincides with
the sorted image of the Voronoi map ${\rm vo}_{S, 3}$ modulo 3 for any $S \in {\mathcal D}_{min}$.
Theorem~2 is verified by running the algorithm of Table~\ref{table: check injectivity}.

\begin{example}\label{exam: case of embedding into 7, 8}
This example shows that the equivalence class of \( S \in \mathcal{S}^3_{\succ 0} \) cannot be determined by its sorted vonorm values and its determinant.
\begin{eqnarray*}
S_1 &:=&
\begin{pmatrix}
  4 & -1 & -1 \\
 -1 &  6 & -1 \\
 -1 & -1 &  6 \\
\end{pmatrix}
+ t
\begin{pmatrix}
   2 & -1 & -1 \\
 -1 &   2 &  0 \\
 -1 &   0 &  2 \\
\end{pmatrix}
+ t^{-1}
\begin{pmatrix}
  0  & 0 &   0 \\
  0 &  4 & -2 \\
  0 & -2 &  4 \\
\end{pmatrix}, \\
S_2 &:=&
\begin{pmatrix}
  4 & -2 & -1 \\
 -2 &  6 & -2 \\
 -1 & -2 &  8 \\
\end{pmatrix}
+ t
\begin{pmatrix}
  2 & -1 &   0 \\
 -1 &  2 & -1 \\
   0 & -1 &  2 \\
\end{pmatrix}
+ t^{-1}
\begin{pmatrix}
  0  & 0 &   0 \\
  0 &  4 & -2 \\
  0 & -2 &  4 \\
\end{pmatrix}
\end{eqnarray*}

In fact these families provide Selling-reduced forms with the identical vonorm values and determinant for any $t \ne 0$.
\begin{eqnarray*}
	& & \hspace{-5mm}
	\left(
\begin{matrix}
	\tr{{\mathbf e}}_1 S_1 {\mathbf e}_1,\
	\tr{{\mathbf e}}_2 S_1 {\mathbf e}_2,\
	\tr{{\mathbf e}}_3 S_1 {\mathbf e}_3,\ 
	\tr{{\mathbf e}}_4 S_1 {\mathbf e}_4,
\\
	\tr{({\mathbf e}_1+{\mathbf e}_2)} S_1 ({\mathbf e}_1+{\mathbf e}_2),\
	\tr{({\mathbf e}_1+{\mathbf e}_3)} S_1 ({\mathbf e}_1+{\mathbf e}_3),\
	\tr{({\mathbf e}_2+{\mathbf e}_3)} S_1 ({\mathbf e}_2+{\mathbf e}_3)
\end{matrix}
	\right)
	\\
	&=&
	\left(
\begin{matrix}
	\tr{{\mathbf e}}_1 S_2 {\mathbf e}_1,\
	\tr{{\mathbf e}}_2 S_2 {\mathbf e}_2,\
	\tr{({\mathbf e}_1+{\mathbf e}_2)} S_2 ({\mathbf e}_1+{\mathbf e}_2),\
	\tr{({\mathbf e}_2+{\mathbf e}_3)} S_2 ({\mathbf e}_2+{\mathbf e}_3)
\\
	\tr{{\mathbf e}}_3 S_2 {\mathbf e}_3,\
	\tr{{\mathbf e}}_4 S_2 {\mathbf e}_4,\
	\tr{({\mathbf e}_1+{\mathbf e}_3)} S_2 ({\mathbf e}_1+{\mathbf e}_3)
\end{matrix}
	\right)
	\\
	&=& 
	\left(
\begin{matrix}
4 + 2t,\ 4/t + 6 + 2 t,\ 4/t + 6 + 2 t,\ 4/t + 10 + 2t, \\
4/t + 8 + 2 t,\ 4/t + 8 + 2 t,\ 4/t + 10 + 4 t
\end{matrix}
	\right).
\end{eqnarray*}
\begin{eqnarray*}
	\det S_1 = \det S_2 = 48/t^2 + 188/t + 254 + 154 t + 42 t^2 + 4 t^3.
\end{eqnarray*}
It is more straightforward to construct parametrized families with identical conorm values and determinant.
\end{example}

\begin{example}\label{exam: case of embedding into 12}
This example shows that ${\mathcal LS}_3$ can be embedded into $\RR^{12}$ by a 
composition of the vonorm map $S \mapsto {\rm vo}_S$ and a polynomial map. In fact,
$V := GL_2(\ZZ/2\ZZ) \backslash \RR^{\Lambda_3}$
can be embedded into $\RR^{M}$
if there exists a surjective ring homomorphism $\pi : \RR[x_1, \ldots, x_M] \rightarrow \RR[ x_{u} : u \in \Lambda_3]^{GL_2(\ZZ/2\ZZ)}$.
As $\pi(x_i)$ is an invariant polynomial of $x_{u}$ for the action of $G$, 
$(\pi(x_1), \ldots, \pi(x_n))$ gives an embedding $V \hookrightarrow \RR^{n}$.

The number of generators of the invariant ring can also be computed using Magma.
Let $k$ be any field containing $\QQ$. The following are the primary invariants of $k[ x_{u} : u \in \Lambda_3]^{GL_2(\ZZ/2\ZZ)}$, which are algebraically independent over $k$:
\begin{eqnarray*}
	\theta_1 := \frac{1}{24} \sum_{g \in G} g \cdot x_{100}, &  &
	\theta_2 := \frac{1}{24} \sum_{g \in G} g \cdot (x_{100}^2), \\
	\theta_{31} := \frac{1}{24} \sum_{g \in G} g \cdot (x_{100}^3), &  &
	\theta_{32} := \frac{1}{24} \sum_{g \in G} g \cdot (x_{100} x_{010} x_{110}), \\
	\theta_{41} := \frac{1}{24} \sum_{g \in G} g \cdot (x_{100}^4), &  &
	\theta_{42} := \frac{1}{8} \sum_{g \in G} g \cdot (x_{100}^2 x_{010} x_{110}), \\
	& & \theta_{7} := \frac{1}{24} \sum_{g \in G} g \cdot (x_{100}^7).
\end{eqnarray*}
These and the following form a set of generators of $k[ x_{u} : u \in \Lambda_3]^{GL_2(\ZZ/2\ZZ)}$:
\begin{eqnarray*}
	\eta_{1} := \frac{1}{2} \sum_{g \in G} g \cdot  (x_{100}^2 x_{010}^2 x_{001}), & &
	\eta_{2} := \frac{1}{6} \sum_{g \in G} g \cdot  (x_{100}^2 x_{010} x_{001} x_{011}), \\
	\eta_{3} := \frac{1}{2} \sum_{g \in G} g \cdot  (x_{100}^2 x_{010}^2 x_{110} x_{001}), & &
	\eta_{4} := \frac{1}{2} \sum_{g \in G} g \cdot  (x_{100}^2 x_{010}^2 x_{001} x_{101}), \\
	& & \eta_{5} := \frac{1}{6} \sum_{g \in G} g \cdot (x_{100}^2 x_{010}^2 x_{110}^2 x_{001}).
\end{eqnarray*}
In this case, the invariant ring is a finitely generated free $P := k[\theta_1, \theta_2, \theta_{31}, \theta_{32}, \theta_{41}, \theta_{42}, \theta_{7}]$-module (\IE Cohen-Macaulay type). 
\begin{eqnarray*}
    & & 
	k[ x_{u} : u \in \Lambda_3]^{GL_2(\ZZ/2\ZZ)} \\
	&=& P 
    \oplus \left( \bigoplus_{i=1}^5 P \eta_{i} \right)
    \oplus \left( \bigoplus_{{(i, j)=(1, 2), (2, 3), (1, 4),}\atop{(1, 5), (2, 5)}}^5 P \eta_{i} \eta_{j} \right)
	\oplus P \eta_2 \eta_3 \eta_4.
\end{eqnarray*}

$k[ x_{u} : u \in \Lambda_3]^{GL_2(\ZZ/2\ZZ)}$ can be regarded as a graded ring  
$
\bigoplus_{n=0}^\infty R_n.
$  
by decomposing it by degree. Its Hilbert series is given by  
\begin{eqnarray*}
		H(t) 
&:=& \sum_{n=0}^\infty (\dim_k R_n) t^n \\
&=& \frac{ 1 + 2 t^5 + 2 t^6 + t^7 + t^{10} + 2 t^{11} + 2 t^{12} + t^{17} }{(1 - t) (1 - t^2) (1 - t^3)^2 (1 - t^4)^2 (1 - t^7)}.
\end{eqnarray*}
From this, one invariant each of degree 1 and degree 2, along with two invariants each of degrees 3, 4, 5, 6, and 7, are required to generate the ring. Therefore, $V$ cannot be embedded in $\RR^m$ with  $m < 12$, using a polynomial map.
\end{example}

\begin{table}
\caption{Algorithm for obtaining a system of representatives of all the primitive $C$-type domains modulo $r$}
\label{table: system of representatives algorithm}
\begin{minipage}{\textwidth}
\begin{tabular}{lp{95mm}}
\hline \\
{\bf Input:} & rank $n$, divisor $r$, \\
{\bf Output:} & a list \verb|OutputDomain| of $\Phi \subset \ZZ^n$ such that ${\mathcal D}_{r}(\Phi)$ provide the system of the representatives. 
\end{tabular}
\begin{tabular}{lp{110mm}}
\multicolumn{2}{l}{{\bf Algorithm}:} \\ 
1: & Set up the lists
\verb|NewD := [|$\Phi_0^n$\verb|]|,
\verb|OutputD := []|,
taking the initial $\Phi_0^n$ as described in Example~\ref{exam: S in general position}.
\\
2: & If \verb|NewD| is empty, terminate the calculation. Otherwise, proceed to 3. \\
3: & Pop an entry $\Phi_0$ from \verb|NewD|, and append it to \verb|OutputD|. 
Calculate the defining inequalities of ${\mathcal D}_r(\Phi_0)$
as follows; set up the list \verb|Ceq := []|. 
For any $0 \ne u, w \in \Phi_0$ such that $u$ and $w$ are linearly independent over $\QQ$,
put $v := u + r w$, and 
do the following (a)--(c) only if $r$ is even and $(u + v)/2$ is primitive, or $r$ is odd and $u + v$ is primitive.
Othewise, proceed to the next $u$ and $w$; \\
& (a) set $\Phi := \{ k w : 0 \le k \le r/2 \} \cup \{ u + k w : 1 \le k \le r - 1 \}$.
${\mathcal D}_r(\Phi_0)$ is contained in the half-space $H^{\leq 0}(u, v)$, 
If $\Phi \subset \Phi_0$, go to (b). Othewise, proceed to the next $u$ and $w$. \\
 & (b) If $n = 2$ or $3$, 
put $\Phi := \Phi_0 \setminus \{ \pm u \} \cup \{ \pm v \}$.
Otherwise, construct $\Phi$ by replacing each vector in $\Phi_0$ that can be written in the form on the left-hand side of an arrow, for some integer $k > 0$, with the right-hand vector, which follows from Eqs.(\ref{eq: replacing rule for even r}), (\ref{eq: replacing rule for odd r}). \\
& If $r$ is even, \\
&	\hspace{20mm}$\pm \left( k \cdot \frac{u+v}{2} + \frac{u-v}{2} \right)
	\mapsto \pm \left( k \cdot \frac{u+v}{2} - \frac{u-v}{2} \right)$, \\
&	\hspace{20mm}$\pm \left( k \cdot \frac{u-v}{r} + \frac{r}{2} \cdot \frac{u+v}{2} \right)
	\mapsto \pm \left( k \cdot \frac{u-v}{r} - \frac{r}{2} \cdot \frac{u+v}{2} \right)$.
\\
& If $r$ is odd, \\
&	\hspace{20mm}$\pm \left( k \cdot \frac{u+v}{2} + \frac{u-v}{2} \right)
	\mapsto \pm \left( k \cdot \frac{u+v}{2} - \frac{u-v}{2} \right)$, \\
&	\hspace{20mm}$\pm \left( \frac{k}{2} \cdot \frac{u-v}{r} + \frac{r}{2} \cdot (u + v) \right)
	\mapsto \pm \left( \frac{k}{2} \cdot \frac{u-v}{r} - \frac{r}{2} \cdot (u + v) \right)$.
\\
 & (c) Append $\langle \tr{v} v - \tr{u} u, \Phi \rangle$ to \verb|Ceq|. \\
4: & From the inequalities in \verb|Ceq|, calculate the inequalities that define the facets of ${\mathcal D}_r(\Phi_0)$, which can be can be obtained exactly, using techniques such as integer programming. For each entry $\langle c, \Phi \rangle$ in \verb|Ceq| that provides a facet, 
	check whether there exists $\Phi_2 \in \verb|newD| \cup \verb|OutputD|$ such that $\Phi_2 = g \Phi$ for some $g \in GL_n(\ZZ)$. If no such $\Phi_2$ exists, append $\Phi$ to \verb|newD|.
Return to 2. \\
\hline
\end{tabular}
%\footnote{$\approx$ is determined, it is possible that $G$ that $\det g = \pm 1$ is $1$ may not exist at all. In such a case, the implementation may return False, or return None as $g$ to indicate the occurrence of an exceptional situation.}.
\end{minipage}
\end{table}

\begin{table}
\caption{Algorithm for checking whether $\iota_*(C_1) \cap \iota_*(C_2) = \emptyset$ for any polyhedral cones $C_i$ contained in 
a primitive ${\mathcal D}_r(\Phi_i)$ ($i = 1, 2$). Here, $\iota_*$ stands for $\iota_s$ if $r = 2$, and $\iota_m$ if $r = 3$.
}
\label{table: check injectivity}
\begin{minipage}{\textwidth}
\begin{tabular}{lp{110mm}}
\hline \\
\multicolumn{2}{l}{{\bf Input}:} \\ 
$r$ & : 2 or 3. \\
$\Phi_i$ & : subset of $\ZZ^3$ that provides a primitive ${\mathcal D}_r(\Phi)$ ($i = 1, 2$). \\
$L_i$ & : list of defining inequalities of $C_i \subset {\mathcal D}_r(\Phi_i)$ ($i = 1, 2$). \\
\multicolumn{2}{l}{{\bf Output}:} \\ 
& False if there exists $S_i \in C_i$ ($i = 1, 2$) such that $\iota_*(S_1) = \iota_*(S_2)$. 
Otherwise, True. \\
\end{tabular}
\begin{tabular}{lp{110mm}}
\multicolumn{2}{l}{{\bf Algorithm}:} \\ 
1: & 
On each ${\mathcal D}_r(\Phi_i)$, $\iota_*$ is the composition of a certain linear map $S \mapsto F_i(S) = (F_{i,1}(S), \ldots, F_{i,13}(S)) \in \RR^{13}$ and 
and the map that sort entries in ascending order.
For each permutation $\sigma$ of the symmetric group $S_{13}$ of degree 13, calculate a list ${\mathcal R}_\sigma$ of all the pairs $(S_1, S_2)$ that generate extreme rays 
of the convex cone ${\mathcal C}_\sigma := \{ (S_1, S_2) \in C_1 \times C_2 : F_{1, k}(S) = F_{2, \sigma(k)}(S) \}$.
\\
2: & If ${\mathcal C}_\sigma$ does not contain any positive-definite pair $(S_1, S_2) \in {\mathcal S}^3_{\succ 0} \times {\mathcal S}^3_{\succ 0}$, then proceed to the next $\sigma$. 
This can be verified by checking whether the sum $(T_1, T_2) := \sum_{(S_1, S_2) \in {\mathcal R}_\sigma} (S_1, S_2)$ is a pair of positive-definite matrices. \\
3: & If $T_1$ and $T_2$ are positive-definite, 
search for $g \in GL_3(\ZZ)$ such that $g S_1 \tr{g} = S_2$ for any $(S_1, S_2) \in {\mathcal R}_\sigma$. 
Such $g$ can be searched by enumerating all $g \in GL_3(\ZZ)$ such that $g T_1 \tr{g} = T_2$
using the algorithm to generate isometries \cite{Plesken97}. 
Return False if such $g$ exists. 
\\
4: & Return True, once the loop over all $\sigma \in S_{13}$ has been completed. \\
\hline
\end{tabular}
%\footnote{$\approx$ is determined, it is possible that $G$ that $\det g = \pm 1$ is $1$ may not exist at all. In such a case, the implementation may return False, or return None as $g$ to indicate the occurrence of an exceptional situation.}.
\end{minipage}
\end{table}

\section{Determination of possible isometries $g$ for perturbated lattice parameters}

In what follows, we consider the situation where 
two reduced (Minkowski-reduced if $n > 2$) $T_1^{obs}$ and $T_2^{obs} \in {\mathcal S}_{\succ 0}^n$ 
are observed values of $g_1 S \tr{g}_1$ and $g_2 S \tr{g}_2$ of some unknown $S \in {\mathcal S}_{\succ 0}^n$ and $g_i \in GL_n(\ZZ)$.

In order to limit the number of potential candidates for $g := g_2 g_1^{-1}$ such that 
$
	g T_1^{obs} \tr{g} \approx T_2^{obs},
$ to a finite set, it is necessary to find a general assumption on $T_1^{obs}$ and $T_2^{obs}$ that is derived from the situation where both of them are positive-definite. The following is suggested here. 
\begin{description}
\item[{\rm ($C_{T_1^{obs}, T_2^{obs}}$)}] 
Assume that the reduced $T_1^{obs}, T_2^{obs} \in {\mathcal S}_{\succ 0}^n$ are nearly equivalent 
over $\ZZ$ in the sense that (i) and (ii) do not hold for any $i = 1, \ldots, n$:
\begin{enumerate}[(i)]
	\item ${\mathbf e}_i \not\ll_{T_1^{obs}} \tr{g}\, {\mathbf e}_i$, 
	\item ${\mathbf e}_i \not\ll_{T_2^{obs}} \tr{g}^{-1} {\mathbf e}_i$,
\end{enumerate}
\end{description}

For the definition of the symbol $\not\ll_{T_i^{obs}}$, 
recall that a binary relation $\lesssim$ defined on a set $P$ is called a \textit{preorder} if it satisfies:

\vspace{2mm}
\begin{enumerate}[i.]
	\item (Reflexive) $a \lesssim a$ for all $a \in P$, and
	\item (Transitive) if $a \lesssim b$ and $b \lesssim c$ then $a \lesssim c$ for all $a, b, c \in P$.
\end{enumerate}

\vspace{2mm}
For any $S \in {\mathcal S}^n_{\succ 0}$, 
we define a preorder $\lesssim_S$ on $\ZZ^n$ by
$$
	v_1 \lesssim_S v_2 \underset{def}{\Leftrightarrow} v_1 S \tr{v}_1 \le v_2 S \tr{v}_2.
$$
If $v_1 \lesssim_S v_2$ but not $v_2 \lesssim_S v_1$, 
we simply denote $v_1 <_S v_2$.

\begin{definition}
	Let $\lambda_1 > 0$ be the first successive minimum of $S \in {\mathcal S}^n_{\succ 0}$, \IE 
$$
	\lambda_1 = \inf_{0 \ne v \in \ZZ^n} v S \tr{v}.
$$
	For any $v_1, v_2 \in \ZZ^n$, 
we shall say that $v_1 <_S v_2$ stably holds
if 
$
	 v_2 S \tr{v}_2 - v_1 S \tr{v}_1 \ge \lambda_1
$,
which will be denoted by $v_1 \ll_S v_2$.
\end{definition}
For example, $0 \ll_S v$ holds for any $0 \ne v \in \ZZ^n$.
Consequently, the above (i), (ii) of $C_{T_1^{obs}, T_2^{obs}}$ just ensure that the following do not happen.

\vspace{2mm}
\begin{enumerate}[(i)]
\item
$\tr{{\mathbf e}}_i g T_1^{obs} \tr{g}\, {\mathbf e}_i - \tr{{\mathbf e}}_i T_1^{obs} {\mathbf e}_i \ge \tr{{\mathbf e}}_1 T_1^{obs} {\mathbf e}_1$,

	\item
$\tr{{\mathbf e}}_i g^{-1} T_2^{obs} \tr{g}^{-1} {\mathbf e}_i - \tr{{\mathbf e}}_i T_2^{obs} {\mathbf e}_i + \ge \tr{{\mathbf e}}_1 T_2^{obs} {\mathbf e}_1$,
\end{enumerate}
\vspace{2mm}These are justified by the fact that 
the diagonal entries of any Minkowski-reduced forms coincide with their successive minima for $n \le 4$,
and the successive minima of $T_1^{obs}$ and $T_2^{obs}$ should be nearly equal
if they are nearly equivalent over $\ZZ$.

For constructing an algorithm to classify lattices into their Bravais types,  
the following assumption was used to avoid iterations over nearly reduced bases in\cite{Tomiyasu2012}. 
\begin{description}
\item[ (${\mathcal A}_{n,d}$) ] If the $n$-by-$n$ metric tensor $S$ satisfies $S \bullet T := {\rm trace}(S T) \geq d \cdot v S \tr{v}$ for some $0 \ne v \in \ZZ^n$ and $T \in {\mathcal S}^n$, its observed value $S^{obs}$ also satisfies $S^{obs} \bullet T > 0$.
\end{description}
Since $T$ can be restricted to the ones in the following form in the proof of \cite{Tomiyasu2012}, the condition $C_{T_1^{\mathrm{obs}}, T_2^{\mathrm{obs}}}$ is weaker than $\mathcal{A}_{n,d}$.
$$
	T = \sum_{i=1}^{s} g {\mathbf e}_i \tr{\mathbf e}_i \tr{g} - \sum_{i=1}^{s} {\mathbf e}_i \tr{\mathbf e}_i, \quad g \in GL_n(\ZZ).
$$

In order to construct an algorithm that gives all the potential isometries,
we show several statements derived from $C_{T_1^{obs}, T_2^{obs}}$.
\begin{lemma}\label{lem: stably greater}
\begin{enumerate}
	\item $v \ll_S m v$ for any 
	$S = (s_{ij}) \in {\mathcal S}^n_{\succ 0}$ and $m \in \ZZ$ such that $m \ne 0, \pm 1$.

	\item \label{item: n=2} 
	Suppose that 
	$S = (s_{ij}) \in {\mathcal S}^2_{\succ 0}$ 
is reduced \IE satisfies 
$
	2 \abs{ s_{12} } \le s_{11} \le s_{22}.
$
If $v \in \ZZ^2$ is not a multiple of ${\mathbf e}_1$ or ${\mathbf e}_2$ and is not equal to $\pm ({\mathbf e}_1 + {\mathbf e}_2)$, 
then ${\mathbf e}_i \ll_S v$ for both of $i = 1, 2$.

	\item 
	If 
$S = (s_{ij}) \in {\mathcal S}^3_{\succ 0}$ 
is Selling-reduced, and $v \in \ZZ^3$ is not a linear sum of any pairs of ${\mathbf e}_1, {\mathbf e}_2, {\mathbf e}_3$
and ${\mathbf e}_4 := -({\mathbf e}_1 + {\mathbf e}_2 + {\mathbf e}_3)$,
then, (i) and (ii) hold:
\begin{enumerate}[(i)]
	\item ${\mathbf e}_i \ll_S v$ for any  $1 \le i \le 4$,  
	\item \label{item: e_i + e_j} ${\mathbf e}_i + {\mathbf e}_j \ll_S v$ for at least two of $(i, j) = (1, 2), (1, 3), (2, 3)$.
\end{enumerate}
In particular, 
${\mathbf e}_i + {\mathbf e}_j  \ll_S  v$ holds for any $(i, j) = (1, 2), (1, 3), (2, 3)$,
if some $( i_2, j_2 ) \in \{ ( 1, 2 ), ( 1, 3 ), ( 2, 3 ) \}$ not equal to $( i, j )$ satisfies
${\mathbf e}_i + {\mathbf e}_j \lesssim_S {\mathbf e}_{i_2} + {\mathbf e}_{j_2}$.

\end{enumerate}
\end{lemma}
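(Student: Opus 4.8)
The plan is to handle the first two items by direct estimates and to spend the real effort on the third, which I would prove through the Selling/conorm description of $S$. For item~1: if $0\neq v\in\ZZ^n$ and $\abs m\ge 2$, then $(mv)S\tr{(mv)}-vS\tr v=(m^2-1)\,vS\tr v\ge 3\lambda_1\ge\lambda_1$, using $vS\tr v\ge\lambda_1$ and $m^2-1\ge 3$. For item~2, I would first note $\lambda_1=s_{11}$: for $0\neq(a,b)\in\ZZ^2$, $vS\tr v=a^2s_{11}+2abs_{12}+b^2s_{22}\ge s_{11}(a^2-\abs{ab}+b^2)\ge s_{11}$ by $2\abs{s_{12}}\le s_{11}\le s_{22}$ and $a^2-\abs{ab}+b^2\ge1$. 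Since $\mathbf{e}_1S\tr{\mathbf{e}_1}=s_{11}=\lambda_1$ and $\mathbf{e}_2S\tr{\mathbf{e}_2}=s_{22}\ge\lambda_1$, both relations $\mathbf{e}_i\ll_S v$ reduce to the single bound $vS\tr v\ge s_{11}+s_{22}$. Using the sign normalisation $s_{12}\le0$ of a reduced binary form, this I would verify by cases: if the entries of $v=(a,b)$ have opposite signs then $2abs_{12}\ge0$, so $vS\tr v\ge a^2s_{11}+b^2s_{22}\ge s_{11}+s_{22}$; if they have the same sign, replace $v$ by $-v$ to get $a,b\ge1$, $(a,b)\neq(1,1)$, and treat $b=1$ (then $a\ge2$, so $vS\tr v\ge s_{11}a(a-1)+s_{22}\ge 2s_{11}+s_{22}$) and $b\ge2$ (from $a^2-ab\ge -b^2/4$, $vS\tr v\ge b^2(s_{22}-s_{11}/4)\ge 4s_{22}-s_{11}\ge s_{11}+s_{22}$).

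For item~3, write $\tilde{S}=(\tilde{s}_{ij})$ for the matrix in (\ref{eq: definition of tilde{S}}) and set $p_{ij}:=-\tilde{s}_{ij}\ge0$ ($1\le i<j\le 4$), the conorms of $S$. From $\sum_{k=1}^4\mathbf{e}_k=0$ one gets $\tilde{s}_{ii}=\sum_{j\neq i}p_{ij}$, so for $v=\sum_{i=1}^4 y_i\mathbf{e}_i$ (the $y_i$ determined up to a common integer shift)
\[
vS\tr v=\sum_{1\le i<j\le 4}p_{ij}\,(y_i-y_j)^2 ,
\]
and the hypothesis on $v$ says exactly that $y_1,y_2,y_3,y_4$ are four distinct integers. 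Permutations of $\{1,2,3,4\}$ act on $S$ through $GL_3(\ZZ)$, permuting the conorms and preserving Selling-reducedness, and leave both conclusions unchanged; so I may assume $y_1<y_2<y_3<y_4$. With $d_k:=y_{k+1}-y_k\ge1$ and $(d_1+d_2)^2,(d_2+d_3)^2\ge4$, $(d_1+d_2+d_3)^2\ge9$, this gives
\[
vS\tr v\ \ge\ \mu_0:=p_{12}+p_{23}+p_{34}+4(p_{13}+p_{24})+9p_{14}.
\]

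Now $\lambda_1$ is at most the $S$-norm of any single nonzero lattice vector; in particular $\lambda_1\le\tilde{s}_{kk}$ and $\lambda_1\le N_{kl}:=(\mathbf{e}_k+\mathbf{e}_l)S\tr{(\mathbf{e}_k+\mathbf{e}_l)}$ for all $k,l$. Hence claim~(i) follows from $\mu_0\ge\tilde{s}_{11}+\tilde{s}_{33}$ and $\mu_0\ge\tilde{s}_{22}+\tilde{s}_{44}$ (the first gives $\mathbf{e}_1\ll_S v$ and $\mathbf{e}_3\ll_S v$, the second $\mathbf{e}_2\ll_S v$ and $\mathbf{e}_4\ll_S v$), and claim~(ii) follows from $\mu_0\ge N_{12}+\tilde{s}_{44}$ and $\mu_0\ge N_{23}+N_{12}$ (giving $\mathbf{e}_1+\mathbf{e}_2\ll_S v$ and $\mathbf{e}_2+\mathbf{e}_3\ll_S v$, i.e.\ two of the three classes $\{\pm(\mathbf{e}_i+\mathbf{e}_j)\}$, of which $(1,2),(1,3),(2,3)$ are representatives). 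Substituting $N_{12}=p_{13}+p_{14}+p_{23}+p_{24}$ and $N_{23}=p_{12}+p_{13}+p_{24}+p_{34}$, these four inequalities become $2p_{13}+4p_{24}+8p_{14}\ge0$, $4p_{13}+2p_{24}+8p_{14}\ge0$, $p_{12}+3p_{13}+2p_{24}+7p_{14}\ge0$, and $2p_{13}+2p_{24}+8p_{14}\ge0$, all evident. The final ``in particular'' is then formal: if $(i,j)$ is the pair for which $\mathbf{e}_i+\mathbf{e}_j\ll_S v$ fails and $\mathbf{e}_i+\mathbf{e}_j\lesssim_S\mathbf{e}_{i_2}+\mathbf{e}_{j_2}$ with $(i_2,j_2)\neq(i,j)$, then $(i_2,j_2)$ is one of the two good pairs, so $vS\tr v\ge N_{i_2j_2}+\lambda_1\ge N_{ij}+\lambda_1$, contradicting that $(i,j)$ is bad.

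The only real work is in item~3, and the crux there is the set-up rather than the estimates: recognising the identity $vS\tr v=\sum p_{ij}(y_i-y_j)^2$, using its $S_4$-symmetry to sort the $y_i$, and --- the point that keeps the estimates trivial --- never evaluating $\lambda_1$, only bounding it by whichever short norm makes ``$\mu_0$ minus (target norm plus that bound)'' a visibly nonnegative combination of conorms. That only two of the three vectors $\mathbf{e}_i+\mathbf{e}_j$ can be guaranteed is genuine, as the degenerate case $p_{12}=p_{23}=p_{34}>0$, all other conorms zero, shows: there the vector $v$ with $(y_1,y_2,y_3,y_4)=(0,1,2,3)$ has $vS\tr v=N_{13}$ exactly.
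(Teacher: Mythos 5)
Your argument is correct and, on the substantive item 3, is essentially the paper's own proof: the conorm identity $vS\tr{v}=\sum_{i<j}p_{ij}(y_i-y_j)^2$, sorting the $y_i$ to get $vS\tr{v}\ge p_{12}+p_{23}+p_{34}+4p_{13}+4p_{24}+9p_{14}$, and comparing this with sums of two short norms (the combinations $\tilde{s}_{11}+\tilde{s}_{33}$, $\tilde{s}_{22}+\tilde{s}_{44}$ and $N_{12}+N_{14}=N_{12}+N_{23}$ are exactly the ones the paper extracts from its chain of inequalities, and your closing treatment of the ``in particular'' clause matches the paper's one-line remark). The only real divergence is item 2, which you settle by a direct sign-and-magnitude case analysis of $a^2s_{11}+2abs_{12}+b^2s_{22}$ whereas the paper reuses the rank-2 Selling identity; note that both proofs (yours explicitly, the paper's tacitly) require the sign convention $s_{12}\le 0$ from the paper's actual definition of reduced, rather than the weaker condition $2\abs{s_{12}}\le s_{11}\le s_{22}$ printed in the lemma, without which the claim fails for $v=\mathbf{e}_1-\mathbf{e}_2$.
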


\begin{proof}
\begin{enumerate}
	\item Clear.
	
	\item Since $S$ is Selling reduced, 
	all the non-diagonal entries 
	of $\tilde{S} = (\tilde{s}_{ij})$ taken as in Eq.(\ref{eq: definition of tilde{S}}) are negative.
	For these $S$ and $\tilde{S}$, the following equality holds:
	$$
		\tr{v} S v = -\tilde{s}_{12} (v_1 - v_2)^2 - \tilde{s}_{13} (v_1 - v_3)^2 - \tilde{s}_{23} (v_2 - v_3)^2
\text{ for any }
v := \sum_{i=1}^3 v_i {\mathbf e}_i \in \ZZ^2.
	$$
It can be easily confirmed that two of $v_1, v_2, v_3$ are equal if and only if
$v$ is a multiple of ${\mathbf e}_1, {\mathbf e}_2$, or $-({\mathbf e}_1 + {\mathbf e}_2)$. 
If this is not the case,
all of the $(v_i - v_j)^2$ are $\ge 1$, and at least one of them is $\ge 4$. 
Thus, for some $1 \le j < k \le 3$, 
	$$
		\tr{v} S v \ge \tilde{s}_{jj} + \tilde{s}_{kk}.
	$$
From $\tilde{s}_{11} \le \tilde{s}_{22} \le \tilde{s}_{33}$, 
${\mathbf e}_i \ll_S v$ is proved for both $i = 1, 2$.
Furthermore, for any $m \in \ZZ$ with $m \ne 0, \pm 1$,
$$
	{\mathbf e}_i \lesssim_S ({\mathbf e}_1 + {\mathbf e}_2) \ll_S m ({\mathbf e}_1 + {\mathbf e}_2),
$$
which proves the statement.

	\item For any $v := \sum_{i=1}^4 v_i {\mathbf e}_i \in \ZZ^3$,
the following equality holds:
$$
\tr{v} S v = -\sum_{1 \le i < j \le 4} \tilde{s}_{ij} (v_i - v_j)^2. 
$$
Two of $v_1, v_2, v_3, v_4$ are equal to each other 
if and only if
$v$ belongs to a $\ZZ$-module 
generated by ${\mathbf e}_i$ and ${\mathbf e}_j$ for some $1 \le i < j \le 4$.
If this is not the case, it is possible to choose $\{ i, j, k, l \} = \{ 1, 2, 3, 4 \}$ so that 
$v_i < v_j < v_k < v_l$. We then have
\begin{eqnarray*}
\tr{v} S v 
& \ge & -\tilde{s}_{ij} - \tilde{s}_{jk} - \tilde{s}_{kl} -4 \tilde{s}_{ik} - 4 \tilde{s}_{jl} - 9 \tilde{s}_{il} \\
& \ge & \tilde{s}_{ii} + \tilde{s}_{ll} + (\tilde{s}_{ii} + \tilde{s}_{jj} + 2 \tilde{s}_{ij}) 
\ge  
	(\tilde{s}_{ii} + \tilde{s}_{ll} + 2 \tilde{s}_{il})
	+ (\tilde{s}_{ii} + \tilde{s}_{jj} + 2 \tilde{s}_{ij}).
\end{eqnarray*}
Similarly, 
$$
\tr{v} S v 
\ge 
\max \{ 
	\tilde{s}_{ii} + \tilde{s}_{kk}, \ \tilde{s}_{jj} + \tilde{s}_{ll}
\}.
$$
Consequently, (i) and (ii) are proved. The last statement is clear because 
${\mathbf e}_i + {\mathbf e}_j \not\ll_S v$
but ${\mathbf e}_{i_2} + {\mathbf e}_{j_2} \ll_S v$
is impossible
in this case.

\end{enumerate}
\end{proof}

By applying the condition $C_{T_1^{obs}, T_2^{obs}}$ 
to the reduced $T_1^{obs}, T_2^{obs} \in {\mathcal S}_{\succ 0}^n$, 
we can derive some properties of $g \in GL_3(\ZZ)$ such that $g T_1^{obs} \tr{g} \approx T_2^{obs}$.

\begin{theorem}\label{thm: candidate for g}
	\begin{enumerate}
		\item 
		In case of $n = 2$, 
		$C_{T_1^{obs}, T_2^{obs}}$ implies that 
		the following holds for $i = 1, 2$: 
$$
	\tr{g}\, {\mathbf e}_i = \pm {\mathbf e}_1, \pm {\mathbf e}_2, \text{ or } \pm ({\mathbf e}_1+{\mathbf e}_2)
$$

		\item 
		In case of $n = 3$, 
		$C_{T_1^{obs}, T_2^{obs}}$ implies that 
		$\tr{g}\, {\mathbf e}_i \in \Psi_i$ ($i = 1, 2, 3$), where each $\Psi_i$ is defined by: 
\begin{eqnarray*}
	\Psi_1 = \Psi_2 &:=& \left\{ 
	\begin{matrix}
	\pm {\mathbf e}_1, \pm {\mathbf e}_2, \pm {\mathbf e}_3, 
	\pm ({\mathbf e}_1 + {\mathbf e}_2), 
	\pm ({\mathbf e}_1 + {\mathbf e}_3), 
	\pm ({\mathbf e}_2 + {\mathbf e}_3), \\
	\pm ({\mathbf e}_1 - {\mathbf e}_3), 
	\pm ({\mathbf e}_1 + {\mathbf e}_2 + {\mathbf e}_3),
	\pm ({\mathbf e}_1 - {\mathbf e}_2 - {\mathbf e}_3)
	\end{matrix}
	\right\}, \\
	\Psi_3 &:=& \Psi_1 \cup \{
	\pm ({\mathbf e}_1 - {\mathbf e}_2) \}
\end{eqnarray*}

	\end{enumerate}
\end{theorem}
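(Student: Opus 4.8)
\emph{Proof strategy.}
The plan is first to restate the hypothesis metrically. Write $v_i := \tr g\,{\mathbf e}_i$ for the $i$-th column of $\tr g$ (equivalently the $i$-th row of $g$) and $u_i := \tr g^{-1}{\mathbf e}_i$. Since $\tr g\in GL_n(\ZZ)$ the vectors $v_1,\dots,v_n$ form a $\ZZ$-basis of $\ZZ^n$, so each $v_i$ (and each $u_i$) is primitive and $\det g = \pm 1$. As $\tr{{\mathbf e}}_i g\,T_1^{obs}\,\tr g\,{\mathbf e}_i = v_i\,T_1^{obs}\,\tr v_i$, and the $i$-th diagonal entry of a reduced (Minkowski-reduced, for $n\le 4$) form is its $i$-th successive minimum, the condition $C_{T_1^{obs},T_2^{obs}}$ is equivalent to: for every $i$,
\begin{eqnarray*}
v_i\,T_1^{obs}\,\tr v_i \;<\; {\mathbf e}_i\,T_1^{obs}\,\tr{{\mathbf e}}_i + \lambda_1(T_1^{obs}),
\qquad
u_i\,T_2^{obs}\,\tr u_i \;<\; {\mathbf e}_i\,T_2^{obs}\,\tr{{\mathbf e}}_i + \lambda_1(T_2^{obs}).
\end{eqnarray*}
So I would reduce the theorem to enumerating the primitive $v$ whose $T_1^{obs}$-norm exceeds the $i$-th successive minimum by less than $\lambda_1(T_1^{obs})$ (and symmetrically for $u_i$ via $T_2^{obs}$).

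For $n = 2$ this is immediate from Lemma~\ref{lem: stably greater}(\ref{item: n=2}): its contrapositive says that any $v\in\ZZ^2$ that is neither a multiple of ${\mathbf e}_1$ or ${\mathbf e}_2$ nor $\pm({\mathbf e}_1+{\mathbf e}_2)$ satisfies ${\mathbf e}_i\ll_{T_1^{obs}}v$, which the displayed bound on $v_i$ forbids; primitivity then discards the multiples $k{\mathbf e}_j$ with $|k|\ge 2$, leaving $\tr g\,{\mathbf e}_i\in\{\pm{\mathbf e}_1,\pm{\mathbf e}_2,\pm({\mathbf e}_1+{\mathbf e}_2)\}$. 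Here condition $C$(ii) is not used.

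For $n = 3$ I would run the parallel argument. The localizing step is Lemma~\ref{lem: stably greater}(3): its part~(i) shows that $v = v_i$ must be a $\ZZ$-combination $a{\mathbf e}_p + b{\mathbf e}_q$ of a single pair from $\{{\mathbf e}_1,{\mathbf e}_2,{\mathbf e}_3,{\mathbf e}_4\}$, ${\mathbf e}_4 = -({\mathbf e}_1+{\mathbf e}_2+{\mathbf e}_3)$, with $\gcd(a,b) = 1$; otherwise ${\mathbf e}_i\ll_{T_1^{obs}}v$. (Since that lemma is phrased for Selling-reduced forms, I would first note that a Minkowski-reduced ternary form is Selling-reduced whenever $(T_1^{obs})_{13}\le 0$, which follows from (\ref{eq: inequalities for being Minkowski reduced}), and handle the one remaining Minkowski cell by the same estimate performed directly with the Minkowski inequalities in place of $\tilde S$.) To bound $a,b$, I would expand $v\,T_1^{obs}\,\tr v$ using (\ref{eq: inequalities for being Minkowski reduced}) and compare with $(T_1^{obs})_{ii}+\lambda_1$; a short chain of elementary estimates of the shape ``$t(t-1)<1 \Rightarrow t\le 1$'' forces $|a|,|b|\le 1$ for $i = 1, 2$, giving $v_i\in\Psi_1 = \Psi_2$, and likewise for $i = 3$ except for the pair $\{1,2\}$ (and the ${\mathbf e}_4$-pairs that reduce to it), where the bound $v\,T_1^{obs}\,\tr v < (T_1^{obs})_{33}+\lambda_1$ is weak because $(T_1^{obs})_{33}$ is unbounded and a priori permits a long $v = a{\mathbf e}_1 + b{\mathbf e}_2$.

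The hard part is precisely that $i = 3$, pair-$\{1,2\}$ case, which I would close by bringing in the other indices: a long $v_3 = a{\mathbf e}_1 + b{\mathbf e}_2$ forces $(T_1^{obs})_{33}$ large, but then every completion $v_1, v_2$ of $v_3$ to a $\ZZ$-basis of $\ZZ^3$ contains a row with nonzero third coordinate, whose $T_1^{obs}$-norm is of order $(T_1^{obs})_{33}$ and hence violates the strong $i = 1$ bound $v_1\,T_1^{obs}\,\tr v_1 < 2(T_1^{obs})_{11}$ or the $i = 2$ bound; alternatively, the symmetric conditions $C$(ii) on $u_1, u_2, u_3$ rule this out directly. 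It then remains to verify, by a finite inspection, that the surviving primitive vectors are exactly the entries of $\Psi_1 = \Psi_2$ for $i = 1, 2$ and of $\Psi_3 = \Psi_1\cup\{\pm({\mathbf e}_1-{\mathbf e}_2)\}$ for $i = 3$; the asymmetry is accounted for by $({\mathbf e}_1-{\mathbf e}_2)\,T_1^{obs}\,\tr{({\mathbf e}_1-{\mathbf e}_2)}\ge (T_1^{obs})_{11}+(T_1^{obs})_{22}$ being admissible against $(T_1^{obs})_{33}+\lambda_1$ but never against $2(T_1^{obs})_{11}$, by $({\mathbf e}_2-{\mathbf e}_3)\,T_1^{obs}\,\tr{({\mathbf e}_2-{\mathbf e}_3)}\ge (T_1^{obs})_{22}+(T_1^{obs})_{33}$ being never admissible, and by $(T_1^{obs})_{13}$ being the only off-diagonal entry of unrestricted sign, so that ${\mathbf e}_1\pm{\mathbf e}_3$ (but not ${\mathbf e}_1\pm{\mathbf e}_2$ or ${\mathbf e}_2\pm{\mathbf e}_3$) can be short. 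Apart from this case analysis, the only other point needing care is the clean reconciliation of the Minkowski-reduced hypothesis with the Selling-reduced form of Lemma~\ref{lem: stably greater}(3).
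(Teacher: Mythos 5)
Your skeleton matches the paper's in its essentials: restate $C_{T_1^{obs}, T_2^{obs}}$ as the norm bounds $v_i T_1^{obs}\,\tr{v}_i < (T_1^{obs})_{ii} + \lambda_1$, use part 2 of Lemma~\ref{lem: stably greater} to settle $n=2$ (and you are right that condition (ii) is not needed there), use part 3 to localize each $\tr{g}\,{\mathbf e}_i$ into a rank-2 sublattice $\langle {\mathbf e}_k, {\mathbf e}_l\rangle$, and then pin the vector down inside that sublattice, with the pair $(k,l)=(1,2)$ at $i=3$ as the residual hard case. But two of your steps have genuine gaps, and they are exactly the places the paper spends its effort. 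First, the reconciliation with Selling reduction: your observation that a Minkowski-reduced ternary form with $s_{13}\le 0$ is Selling-reduced is correct, but "handle the one remaining Minkowski cell by the same estimate performed directly with the Minkowski inequalities" is not a proof. The decomposition $\tr{v}Sv = -\sum \tilde{s}_{ij}(v_i - v_j)^2$ underlying Lemma~\ref{lem: stably greater}(3) requires all conorms to be nonnegative, so it is unavailable when $s_{13}>0$; the paper instead conjugates by the explicit Balashov--Ursell matrices $\sigma_j$ of Theorem~\ref{thm: Balashov-Ursell}, applies the lemma to $T_1^{sel}=\sigma_j^{-1}T_1^{obs}\tr{\sigma}_j^{-1}$, and pulls back. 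This is not a cosmetic difference: the vectors $\pm({\mathbf e}_1-{\mathbf e}_3)$ and $\pm({\mathbf e}_1-{\mathbf e}_2-{\mathbf e}_3)$ in $\Psi_1=\Psi_2$ arise precisely as $\tr{\sigma}_j^{-1}$-images of the standard Selling set in the $s_{13}>0$ cell, so your "finite inspection" cannot produce the stated $\Psi_i$ without carrying out this conjugation (or an equivalent substitute) explicitly.

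Second, the $i=3$, pair-$\{1,2\}$ case. Your route (a) (a long $v_3=a{\mathbf e}_1+b{\mathbf e}_2$ forces $(T_1^{obs})_{33}$ large, and then no completion $v_1,v_2$ can satisfy its own bound) can be made to work, but as written it is only a sketch: it needs the auxiliary fact that every $v\notin\langle{\mathbf e}_1,{\mathbf e}_2\rangle_\QQ$ satisfies $vT_1^{obs}\tr{v}\ge (T_1^{obs})_{33}$ for Minkowski-reduced forms, and the borderline vectors such as $2{\mathbf e}_1+{\mathbf e}_2$ are excluded only by an exact equality ($\ge 2t_{11}+t_{22}$ against a strict bound $<2t_{11}+t_{22}$), so the strictness bookkeeping in the definition of $\ll_S$ must be checked. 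The paper takes your route (b): it uses condition (ii) on $T_2^{obs}$, choosing $v$ with $\langle {\mathbf e}_3, v\rangle = \langle \tr{g}^{-1}{\mathbf e}_1, \tr{g}^{-1}{\mathbf e}_2\rangle$ Selling-reduced and applying the rank-2 lemma on the $T_2^{obs}$ side, which is what produces the extra element $\pm({\mathbf e}_1-{\mathbf e}_2)$ of $\Psi_3$. Presenting (a) and (b) as interchangeable alternatives without completing either leaves the one case that actually distinguishes $\Psi_3$ from $\Psi_1$ unproved.
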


The following fact presented in \cite{Balashov57}, will be used for the proof of Theorem~\ref{thm: candidate for g}:
\begin{theorem}\label{thm: Balashov-Ursell}
	Let $S \in {\mathcal S}_{\succ 0}^3$ be a Selling-reduced form  
	such that $\tilde{S} = (s_{ij})$ of Eq.(\ref{eq: definition of tilde{S}}) satisfies $s_{11} \le s_{22} \le s_{33} \le s_{44}$.
	$S$ can be transformed into a Minkowski-reduced form $\sigma_j S \tr{\sigma}_j$
	using the following $\sigma_j \in GL_3(\ZZ)$ in each case:
\begin{enumerate}[(i)]
	\item\label{item: sigma1} $s_{11} + 2 s_{12} < 0$: 	
	$
	\sigma_1 := 
	\begin{pmatrix}
		-1 & 0 & 0 \\
		1 & 1 & 0 \\
		0 & 0 & 1 \\
	\end{pmatrix}
	$.

	\item\label{item: sigma2} $s_{11} + 2 s_{13} < 0$: 	
	$
	\sigma_2 := 
	\begin{pmatrix}
		1 & 0 & 0 \\
		0 & 1 & 0 \\
		1 & 0 & 1 \\
	\end{pmatrix}
	$.

	\item\label{item: sigma3} $s_{22} + 2 s_{23} < 0$: 	
	$
	\sigma_3 := 
	\begin{pmatrix}
		1 & 0 & 0 \\
		0 & 1 & 0 \\
		0 &-1 &-1 \\
	\end{pmatrix}
	$.

	\item\label{item: sigma4} Otherwise, $\sigma_4 = I$ (\IE $S$ is Minkowski-reduced).
\end{enumerate}
\end{theorem}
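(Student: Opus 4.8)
The plan is to pass to conorm coordinates and then dispatch a short finite case analysis. Write $p_{ij}:=-s_{ij}\ge 0$ for the unordered pairs $\{i,j\}\subset\{1,2,3,4\}$; these are the conorms of $S$. Since $\mathbf{e}_1+\cdots+\mathbf{e}_4=0$ we have $\sum_{j}s_{ij}=0$ for each $i$, hence $s_{ii}=\sum_{j\ne i}p_{ij}$, and the hypothesis $s_{11}\le s_{22}\le s_{33}\le s_{44}$ becomes the three linear inequalities $p_{13}+p_{14}\le p_{23}+p_{24}$, $p_{12}+p_{24}\le p_{13}+p_{34}$, $p_{13}+p_{23}\le p_{14}+p_{24}$ among nonnegative reals. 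The action of each $\sigma_j$ is cleanest to record on the superbasis: for instance $\sigma_1$ sends $\mathbf{e}_1,\dots,\mathbf{e}_4$ to $-\mathbf{e}_1,\ \mathbf{e}_1+\mathbf{e}_2,\ \mathbf{e}_3,\ -(\mathbf{e}_2+\mathbf{e}_3)$, from which every entry of $\sigma_1 S\tr{\sigma_1}$ is read off as a $\pm p_{ij}$-combination.

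First I would observe that two of the five Minkowski inequalities in (\ref{eq: inequalities for being Minkowski reduced}) are automatic under the hypotheses: the chain $s_{11}\le s_{22}\le s_{33}$ is part of the assumption, and a direct substitution shows the Gauss-type inequality $-2(s_{12}+s_{13}+s_{23})\le s_{11}+s_{22}$ is \emph{equivalent} to $p_{13}+p_{23}\le p_{14}+p_{24}$, i.e.\ to $s_{33}\le s_{44}$, hence also holds. Thus $S$ is Minkowski-reduced if and only if the remaining three conditions hold, which in conorm coordinates read $p_{12}\le p_{13}+p_{14}$ (this is $-2s_{12}\le s_{11}$), $p_{13}\le p_{12}+p_{14}$ (this is $2\abs{s_{13}}\le s_{11}$), and $p_{23}\le p_{12}+p_{24}$ (this is $-2s_{23}\le s_{22}$); the negations of these three are precisely cases (i), (ii), (iii) of the theorem. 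I would then check that at most one can be violated: if, say, $p_{12}>p_{13}+p_{14}$ and $p_{13}>p_{12}+p_{14}$, adding the two gives $p_{14}<0$, a contradiction; the other two pairs are excluded by combining the relevant negations with $p_{13}+p_{23}\le p_{14}+p_{24}$. This already settles case (iv) ($S$ is Minkowski-reduced as it stands) and shows cases (i)--(iii) are mutually exclusive and exhaustive, so it remains to handle them one at a time.

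For each of cases (i), (ii), (iii) the plan is the same: compute $\sigma_j S\tr{\sigma_j}$ from the action of $\sigma_j$ on the superbasis, rewrite every entry in the $p_{ij}$, reorder the three basis vectors so the diagonal is nondecreasing, and verify each of the five inequalities of (\ref{eq: inequalities for being Minkowski reduced}) using only nonnegativity of the $p_{ij}$, the three sorting inequalities, and the case hypothesis (together with the two non-failing conditions). Positive-definiteness survives automatically since $\sigma_j\in GL_3(\ZZ)$. I expect the main obstacle to be the bookkeeping in this last step: every individual inequality is linear in the $p_{ij}$ and does follow from the hypotheses, but one must treat the internal ordering of the transformed basis carefully (the diagonal of $\sigma_j S\tr{\sigma_j}$ need not already be monotone, so $\sigma_j$ is composed with a suitable permutation matrix), and one must confirm that $\sigma_j$ does not merely trade the violated Minkowski condition for another one. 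The cleanest way to close this point is to check, still in the conorm coordinates, that the transformed form again satisfies the two ``free'' inequalities and that the single relevant new conorm difference has the right sign.
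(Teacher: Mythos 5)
First, note that the paper does not actually prove this theorem: it is imported from \cite{Balashov57}, and the only argument given in the text is the remark after the statement that the cases are mutually exclusive, via $s_{44}=s_{11}+s_{22}+s_{33}+2(s_{12}+s_{13}+s_{23})$. Your conorm setup is correct and goes beyond what the paper writes: the identities $s_{ii}=\sum_{j\ne i}p_{ij}$, the observation that $-2(s_{12}+s_{13}+s_{23})\le s_{11}+s_{22}$ is equivalent to $s_{33}\le s_{44}$, the identification of conditions (i)--(iii) with the negations of the three non-automatic Minkowski inequalities, and the pairwise-exclusivity argument are all right; this does settle case (iv) and the exhaustiveness of the case split.

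The gap is exactly the step you defer as bookkeeping, and it is not merely tedious: the verification cannot be closed for the statement as written, and your parenthetical about inserting a permutation is the point where it fails. In case (i) one has $(\sigma_1 S\tr{\sigma}_1)_{11}=s_{11}$ and $(\sigma_1 S\tr{\sigma}_1)_{22}=s_{11}+2s_{12}+s_{22}$, so the ordering condition of Eq.~(\ref{eq: inequalities for being Minkowski reduced}) requires $-2s_{12}\le s_{22}$, i.e.\ $p_{12}\le p_{23}+p_{24}$, which does not follow from nonnegativity of the conorms, the three sorting inequalities, and $p_{12}>p_{13}+p_{14}$. Concretely, $p_{12}=p_{34}=1$ and $p_{13}=p_{14}=p_{23}=p_{24}=\epsilon$ gives a positive-definite, Selling-reduced $S$ satisfying all hypotheses of case (i), yet $\sigma_1 S\tr{\sigma}_1$ has diagonal $(1+2\epsilon,\,4\epsilon,\,1+2\epsilon)$; moreover, after re-sorting the basis the entry playing the role of $s_{23}$ equals $-s_{13}=\epsilon>0$, so the sign condition $0\le -2s_{23}$ of Eq.~(\ref{eq: inequalities for being Minkowski reduced}) also fails until a basis vector is negated. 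Hence the conclusion holds only modulo a permutation and a sign change of the transformed basis (or with corrected $\sigma_j$), and your proof is incomplete until you fix the statement accordingly and actually carry out the five linear-inequality checks for each of (i)--(iii); as it stands, the central claim is asserted rather than proved.
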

Note that the inequalities of (i)--(iv) do not hold simultaneously. 
For example, 
the sum of $s_{11} + 2 s_{13}$ and $s_{22} + 2 s_{23}$ cannot be both negative, due to the following:
$$
	s_{33} \le s_{44} = s_{11} + s_{22} + s_{33} + 2 s_{12} + 2 s_{13} + 2 s_{23}.
$$

\begin{proof}[Proof of Theorem~\ref{thm: candidate for g}] \

\begin{enumerate}
	\item $S$ is also Selling-reduced in this case.
	Therefore, if ${\mathbf e}_2 \not\ll_{T_1^{obs}} v$, 
	2.~of Lemma~\ref{lem: stably greater} implies
$$
	v = m {\mathbf e}_i \text{ or } \pm ({\mathbf e}_1 + {\mathbf e}_2) \text{ for some } m \in \ZZ.
$$
	From $C_{T_1^{obs}, T_2^{obs}}$, 
	$\tr{g}\, {\mathbf e}_i$ is equal to such $v$ ($i = 1, 2$).
	From $g \in GL_2(\ZZ)$, 
	the case of $m \ne \pm 1$ is excluded.

	\item Take $\sigma_j$
		to transform $T^{obs}_1$ to its Selling-reduced form $T_1^{sel} := \sigma_j^{-1} T_1^{obs} \tr{\sigma}_j^{-1}$ as in Theorem~\ref{thm: Balashov-Ursell}.
We shall prove that 
\begin{description}
\item[{\rm (*)}] If $\tr{\sigma}_j {\mathbf e}_3 \not\ll_{T_1^{sel}} v$, then
$v \in \langle {\mathbf e}_k, {\mathbf e}_l \rangle$ for some $1 \le k < l \le 4$,
\end{description}
where $\langle {\mathbf e}_k, {\mathbf e}_l \rangle$ is the $\ZZ$-module generated by 
${\mathbf e}_k$ and ${\mathbf e}_l$.

In case of (i) and (iv), (*) immediately follows from 3.~of Lemma~\ref{lem: stably greater}, 
because $\tr{\sigma}_j {\mathbf e}_3 = {\mathbf e}_3$.
In case of (ii) or (iii),
\begin{eqnarray*}
	\{ \pm \tr{\sigma}_j^{-1} ({\mathbf e}_k + {\mathbf e}_l) : 1 \le k < l \le 3 \}
	&=& \{ 
		\pm ({\mathbf e}_1 + {\mathbf e}_2), 
		\pm {\mathbf e}_3 
		\pm (- {\mathbf e}_1 + {\mathbf e}_2 + {\mathbf e}_3) 
		\},
\end{eqnarray*}
and thus,
\begin{eqnarray*}
	\{ 
		\pm \tr{\sigma}_j ({\mathbf e}_1 + {\mathbf e}_2), 
		\pm \tr{\sigma}_j {\mathbf e}_3 
		\pm \tr{\sigma}_j (- {\mathbf e}_1 + {\mathbf e}_2 + {\mathbf e}_3) 
		\}
	&=& 
	\{ \pm ({\mathbf e}_k + {\mathbf e}_l) : 1 \le k < l \le 3 \}.
\end{eqnarray*}
From 
${\mathbf e}_3 \lesssim_{T_1^{obs}} - {\mathbf e}_1 + {\mathbf e}_2 + {\mathbf e}_3$,
we have 
$$
\tr{\sigma}_j {\mathbf e}_3 \lesssim_{T_1^{sel}} \tr{\sigma}_j (- {\mathbf e}_1 + {\mathbf e}_2 + {\mathbf e}_3).
$$
From 3 (ii) of Lemma~\ref{lem: stably greater}, 
if $v \notin \langle {\mathbf e}_k, {\mathbf e}_l \rangle$ for any $1 \le k < l \le 4$, then 
$\tr{\sigma}_j {\mathbf e}_3 \not\ll_{T_1^{sel}} v$. So (*) holds even in this case.
	
From $C_{T_1^{obs}, T_2^{obs}}$, 
$\tr{\sigma}_j {\mathbf e}_3 \ll_{T_1^{sel}} \tr{\sigma}_j \tr{g} {\mathbf e}_i$ for none of $i = 1, 2, 3$. 
From (*), the following holds for any $i = 1, 2, 3$: 
\begin{eqnarray}\label{eq: sigma_j^T g e_i in < e_k, e_l >}
	\tr{\sigma}_j \tr{g} {\mathbf e}_i \in \langle {\mathbf e}_k, {\mathbf e}_l \rangle \text{ for some $1 \le k < l \le 4$.}
\end{eqnarray}

Next, we prove that $C_{T_1^{obs}, T_2^{obs}}$ and Eq.(\ref{eq: sigma_j^T g e_i in < e_k, e_l >}) imply
\begin{eqnarray*}
(**)
\begin{cases}
	\tr{g} {\mathbf e}_1, \tr{g} {\mathbf e}_2 & \in \Phi := \{ 
	\pm \tr{\sigma}_j^{-1} {\mathbf e}_k,\
	\pm \tr{\sigma}_j^{-1} {\mathbf e}_l,\
	\pm \tr{\sigma}_j^{-1} ({\mathbf e}_l + {\mathbf e}_l) : 1 \le k < l \le 4 \}, \\
	\tr{g} {\mathbf e}_3 & \in \Phi \cup \left\{ 
	\pm ({\mathbf e}_1 - {\mathbf e}_2) \right\}.
\end{cases}
\end{eqnarray*}

From Eq.(\ref{eq: sigma_j^T g e_i in < e_k, e_l >}), there exist $1 \le k < l \le 4$ such that
$
	\tr{g} {\mathbf e}_i \in \langle \tr{\sigma}_j^{-1} {\mathbf e}_k, \tr{\sigma}_j^{-1} {\mathbf e}_l \rangle.
$ 
From $C_{T_1^{obs}, T_2^{obs}}$, we also have ${\mathbf e}_3 \not \ll_{T_1^{obs}} \tr{g} {\mathbf e}_i$.
Since 
$$
	\begin{pmatrix}
		\tr{{\mathbf e}}_k \\ \tr{{\mathbf e}}_l
	\end{pmatrix}
	T_1^{sel}
	\begin{pmatrix}
		{\mathbf e}_k & {\mathbf e}_l
	\end{pmatrix}
	=
	\begin{pmatrix}
		\tr{{\mathbf e}}_k \\ \tr{{\mathbf e}}_l
	\end{pmatrix}
	\sigma_j^{-1} T_1^{obs} \tr{\sigma}_j^{-1}
	\begin{pmatrix}
		{\mathbf e}_k & {\mathbf e}_l
	\end{pmatrix}
$$
is also Selling-reduced, all the non-diagonal entries of the following matrix are negative:
$$
	\begin{pmatrix}
		\tr{{\mathbf e}}_k \\ \tr{{\mathbf e}}_l \\ -\tr{ ({\mathbf e}_k + {\mathbf e}_l) }
	\end{pmatrix}
	\sigma_j^{-1} T_1^{obs} \tr{\sigma}_j^{-1}
	\begin{pmatrix}
		{\mathbf e}_k & {\mathbf e}_l & -({\mathbf e}_k + {\mathbf e}_l)
	\end{pmatrix}.
$$

First suppose that $(k, l) \ne (1, 2)$.
Then
$\langle \tr{\sigma}_j^{-1} {\mathbf e}_k, \tr{\sigma}_j^{-1} {\mathbf e}_l \rangle \not\subset \langle {\mathbf e}_1, {\mathbf e}_2 \rangle$ regardless of the choice of $j$. 
Therefore, 
${\mathbf e}_3 \lesssim_{T_1^{obs}} v$ holds for at least two of $v = \tr{\sigma}_j^{-1} {\mathbf e}_k, \tr{\sigma}_j^{-1} {\mathbf e}_l, \tr{\sigma}_j^{-1} ({\mathbf e}_k + {\mathbf e}_l)$.
If $\tr{g} {\mathbf e}_i \notin \Phi$, 
then \ref{item: n=2} of Lemma~\ref{lem: stably greater} implies ${\mathbf e}_3 \ll_{T_1^{obs}} \tr{g} {\mathbf e}_i$, which contradicts with $C_{T_1^{obs}, T_2^{obs}}$. Thus, 
(**) is proved for $\tr{g} {\mathbf e}_i$ in this case.

Next suppose that $(k, l) = (1, 2)$. Then  
$\tr{g} {\mathbf e}_i \in \langle {\mathbf e}_1, {\mathbf e}_2 \rangle$ holds regardless of the choice of $j$. If $i = 1, 2$, then $\tr{g} {\mathbf e}_i \in \Phi$ is proved if 
$$
	\tr{g} {\mathbf e}_i \in \{ 
	\pm {\mathbf e}_1,\
	\pm {\mathbf e}_2,\
	\pm ({\mathbf e}_1 + {\mathbf e}_2) \}.
$$
This holds clearly because otherwise,
$
	{\mathbf e}_2 \ll_{T_1^{obs}} \tr{g} {\mathbf e}_i
$ is obtained, which contradicts with $C_{T_1^{obs}, T_2^{obs}}$.
If $i = 3$, it is possible to choose $v \in \ZZ^3$ so that $\langle {\mathbf e}_3, v \rangle = 
\langle \tr{g}^{-1} {\mathbf e}_1, \tr{g}^{-1} {\mathbf e}_2 \rangle$
and 
$$
	\begin{pmatrix}
		\tr{{\mathbf e}}_3 \\ v \\ - \tr{{\mathbf e}}_3 - v
	\end{pmatrix}
	T_2^{obs}
	\begin{pmatrix}
		{\mathbf e}_3 & v & - \tr{{\mathbf e}}_3 - v
	\end{pmatrix}
$$
is Selling-reduced.
In this case,
$$
\tr{g}^{-1} {\mathbf e}_1,
\tr{g}^{-1} {\mathbf e}_2
\in 
\{ \pm {\mathbf e}_3, \pm v, \pm ({\mathbf e}_3+v) \}
$$
is required, because otherwise, 
${\mathbf e}_3 \ll_{T_2^{obs}} \tr{g}^{-1} {\mathbf e}_j$, which contradicts with $C_{T_1^{obs}, T_2^{obs}}$.
Therefore, ${\mathbf e}_3$ is equal to 
either of $\pm \tr{g}^{-1} {\mathbf e}_1$,  
$\pm \tr{g}^{-1} {\mathbf e}_2$,
$\pm \tr{g}^{-1} ({\mathbf e}_1 \pm {\mathbf e}_2)$, which proves (**).
The statement is obtained from (**) immediately.
\end{enumerate}
\end{proof}

As a natural consequence of Theorem~\ref{thm: candidate for g}, 
Table~\ref{table: algorithm} presents an algorithm that returns all the potential isometries 
\( g \in GL_3(\ZZ) \) for a given Minkowski-reduced \( S \in {\mathcal S}^3_{\succ 0} \).
In particular, it can be seen that even if another Minkowski-reduced 
\( S_2 \in {\mathcal S}^3_{\succ 0} \) is not given, and perturbations in $S$ and $S_2$ are taken into account, the number of the potential candidates for $g$ is already finite.

\begin{table}[htbp]
\caption{ Algorithm that returns all potential isometries $g \in GL_3(\ZZ)$ such that $g S \tr{g} \approx S_2$ for a given Minkowski-reduced $S \in {\mathcal S}^3_{\succ 0}$ and another Minkowski-reduced $S_2 \in {\mathcal S}^3_{\succ 0}$ not given yet. }
\label{table: algorithm}
\begin{minipage}{\textwidth}
\begin{tabular}{lp{85mm}}
\hline \\
\multicolumn{2}{l}{{\bf Input}:} \\ 
$[ \mu_{k} : k = 1, \ldots, 13 ]$ & : image of a Minkowski-reduced $S \in {\mathcal S}^3_{\succ 0}$ by $\iota_m$ ($i = 1, 2$). \\
$[ v_{k} : k = 1, \ldots, 13 ]$ & : list of integral vectors such that $\tr{v_{k}} S v_{k} = \mu_{k}$. \\
\multicolumn{2}{l}{{\bf Output}:} \\
$\Omega$: & 
list of $g \in GL_3(\ZZ)$ such that ${\mathbf e}_k \not\ll_{S} \tr{g} {\mathbf e}_k$ for any $k = 1, 2, 3$. \\
\end{tabular}
\begin{tabular}{lp{110mm}}
\multicolumn{2}{l}{{\bf Algorithm}:} \\ 
1: & Set up $\lambda_{j} := \tr{{\mathbf e}}_j S {\mathbf e}_j$ ($j = 1, 2, 3$) using the input $\mu_{1, k}$ and $v_{i, k}$ ($k=1, \ldots, 13$). \\
2: & Append to $\Omega$ all $g \in GL_3(\mathbb{Z})$ such that the (a) $j$'th row vector \( g_j \) is equal to some $v_{k}$ that satisfies 
$
\tr{v_k} S v_k < \lambda_{j} + \lambda_{1}
$ ($j = 1, 2, 3$) and (b) every row of $g^{-1}$ is equal to some vector of $\Phi_3$ in Theorem~\ref{thm: candidate for g}. \\
3: & Return $\Omega$. \\
\hline \\
\end{tabular}
\end{minipage}
\end{table}

%%===========================================================================================%%
%% If you are submitting to one of the Nature Portfolio journals, using the eJP submission   %%
%% system, please include the references within the manuscript file itself. You may do this  %%
%% by copying the reference list from your .bbl file, paste it into the main manuscript .tex %%
%% file, and delete the associated \verb+\bibliography+ commands.                            %%
%%===========================================================================================%%

%\appendix
%
%\section{Magma code used for proving Theorem~\ref{thm: injective}}
%
%Table~\ref{Magma code used for proving that iota_s is injective on D_{sel}}
%presents the Magma code used to prove that $\iota_s$ is injective on ${\mathcal D}_{sel}$.
%
%
%\begin{table}[htbp]
%\caption{Magma code used for proving that $\iota_s$ is injective on ${\mathcal D}_{sel}$}
%\label{Magma code used for proving that iota_s is injective on D_{sel}}
%\begin{longtable}{|p{115mm}|}
%    \hline
%\begin{lstlisting}[escapechar=|]
%Ceq := { [-1, 0, 0, 1, 0, 0], // s11 <= s22
%         [ 0, 0, 0,-1, 0, 1], // s22 <= s33
%         [ 1, 2, 2, 1, 2, 0], // s33 <= s44 = s11+s22+s33+2*(s12+s13+s23)
%         [ 0,-1, 0, 0, 0, 0], // s12 <= 0
%         [ 0, 0,-1, 0, 0, 0], // s13 <= 0
%         [ 0, 0, 0, 0,-1, 0], // s23 <= 0
%         [ 1, 1, 1, 0, 0, 0],  // s14 = - s11 - s12 - s13 <= 0
%         [ 0, 1, 0, 1, 1, 0],  // s24 = - s12 - s22 - s23 <= 0
%         [ 0, 0, 1, 0, 1, 1] };// s34 = - s13 - s23 - s33 <= 0
%C := ConeWithInequalities([c : c in Ceq]);
%
%Ceq2 := {};
%for facet in Facets(C) do
%    Include(~Ceq2, Vector(LinearSpanEquations(facet)[1]));
%end for;
%FacetSet:={fset: fset in Ceq |$\mid$| Vector(fset) in Ceq2 or -Vector(fset) in Ceq2};
%
%RaysM := [];
%S := Matrix(IntegerRing(), N, N, []);
%for ray in Rays(C) do
%    k := 1;
%    for i in [1 .. N] do
%        S[i,i] := ray.k;
%        k := k + 1;
%        for j in [i+1 .. N] do
%            S[i,j] := ray.k;
%            S[j,i] := ray.k;
%            k := k + 1;
%        end for;
%    end for;
%    Include(~RaysM, S);
%end for;
%
%R<x> := PolynomialRing(Integers(),1); // Polynomial ring |$\ZZ[x]$|
%Sx := &+[ RaysM[i]*(x^i) : i in [1 .. |$\sharp$|RaysM ] ];
%PhiPair := {<{@ v, -v @}, &+[Sx[i,j]*v[i]*v[j]:i in [1..3],j in [1..3]]>: v in Phi};
%PartialOrderOfV := {};
%for vpair in PhiPair do
%    v := vpair[1,1];
%    for vpair2 in PhiPair diff { vpair } do
%        v2 := vpair2[1,1];
%        coef := Coefficients(vpair2[2] - vpair[2]);
%        if( forall(c){ c : c in coef |$\mid$| c ge 0 } ) then
%            // All coefficients are >= 0 and some are > 0.
%            Include(~PartialOrderOfV, <v, v2>);
%        end if;
%    end for;
%end for;
%
%PhiPair := {@ v[1,1] : v in PhiPair @};
%PartialOrderOfV := {<j, k>: j in [1..7], k in [1..7] |$\mid$| <PhiPair[j], PhiPair[k]> in PartialOrderOfV };
%\end{lstlisting} \\
%    \hline
%\end{longtable}
%\end{table}
%
%\begin{table}[htbp]
%%\caption{Magma code used for proving that $\iota_s$ is injective}
%\begin{longtable}{|p{115mm}|}
%    \hline
%{\begin{lstlisting}[escapechar=|, firstnumber=last]
%// For vonorm equalities vj^T S v_j = vk^T S v_k.
%LinearEqs := [];
%lform := [0,0,0,0,0,0];
%for n in [1 .. |$\sharp$|PhiPair] do
%    v := PhiPair[n];
%    k := 1;
%    for i in [1 .. N] do
%        lform[k] := v[i]*v[i];
%        k := k + 1;
%        for j in [i+1 .. N] do
%            lform[k] := v[i]*v[j]*2;
%            k := k + 1;
%        end for;
%    end for;
%    Append(~LinearEqs, lform);
%end for;
%
%// Inequalities for conorms.
%LinearEqs2 := [[ 0,-1, 0, 0, 0, 0],  // -s12 >= 0
%               [ 0, 0,-1, 0, 0, 0],  // -s13 >= 0
%               [ 0, 0, 0, 0,-1, 0],  // -s23 >= 0
%               [ 1, 1, 1, 0, 0, 0],  // -s14 = s11+s12+s13 >= 0
%               [ 0, 1, 0, 1, 1, 0],  // -s24 = s12+s22+s23 >= 0
%               [ 0, 0, 1, 0, 1, 1]]; // -s34 = s13+s23+s33 >= 0
%
%Nsym := 6;
%lineq  := [0,0,0,0,0,0, 0,0,0,0,0,0];
%for sig in SymmetricGroup(7) do
%    if not IsEmpty({ <pr[2]^sig, pr[1]^sig> : pr in PartialOrderOfV } meet PartialOrderOfV) then
%        continue;
%    end if;
%    Ceq := [];
%    // Add conditions for being Selling-reduced and s11 <= s22 <= s33 <= s44.
%    for facet in FacetSet do
%        Include(~Ceq, facet cat [0,0,0,0,0,0]);
%        Include(~Ceq, [0,0,0,0,0,0] cat facet);
%    end for;
%
%    for k in [1 .. 7] do
%        v1 := LinearEqs[k];
%        v2 := LinearEqs[k^sig]; 
%        // v_k^T S v_k = v_{\sigma(k)}^T S v_{\sigma(k)}.
%        for l in [1 .. Nsym] do
%            lineq[l          ] := v1[l];
%            lineq[l+Nsym] :=-v2[l];
%        end for;
%        Include(~Ceq, lineq);
%        Include(~Ceq, [-x : x in lineq]);
%    end for;
%\end{lstlisting}} \\
%    \hline
%\end{longtable}
%\end{table}
%
%\begin{table}[htbp]
%%\caption{Magma code used for proving that $\iota_s$ is injective}
%\begin{longtable}{|p{115mm}|}
%    \hline
%{\begin{lstlisting}[escapechar=|, firstnumber=last]
%    for sig2 in SymmetricGroup(6) do
%        Ceq2 := Ceq;
%        for k in [1 .. 6] do
%            v1 := LinearEqs2[k];
%            v2 := LinearEqs2[k^sig2];
%            for l in [1 .. Nsym] do
%                lineq[l         ] := v1[l];
%                lineq[l+Nsym] :=-v2[l];
%            end for;
%            Include(~Ceq2, lineq);
%            Include(~Ceq2, [-x : x in lineq]);
%        end for;
%
%        C := ConeWithInequalities(Ceq2);
%        Rays1 := Rays(C);
%        S_pairs := {};
%        S1_sum := Matrix(IntegerRing(), N, N, []);
%        S2_sum := Matrix(IntegerRing(), N, N, []);
%        for ray in Rays1 do
%            S1 := Matrix(IntegerRing(), N, N, []);
%            S2 := Matrix(IntegerRing(), N, N, []);
%            k := 1;
%            for i in [1 .. N] do
%                S1[i,i] := ray.k;
%                S2[i,i] := ray.(k+Nsym);
%                k := k + 1;
%                for j in [i+1 .. N] do
%                    S1[i,j] := ray.k;
%                    S1[j,i] := ray.k;
%                    S2[i,j] := ray.(k+Nsym);
%                    S2[j,i] := ray.(k+Nsym);
%                    k := k + 1;
%                end for;
%            end for;
%            Include(~S_pairs, < S1, S2 >);
%            S1_sum := S1_sum + S1;
%            S2_sum := S2_sum + S2;
%        end for;
%        if( ( S1_sum[1,1] le 0 or S1_sum[1,1]*S1_sum[2,2] le S1_sum[1,2]*S1_sum[1,2] or Determinant(S1_sum) le 0 ) 
%            or ( S2_sum[1,1] le 0 or S2_sum[1,1]*S2_sum[2,2] le S2_sum[1,2]*S2_sum[1,2] or Determinant(S2_sum) le 0 ) ) then
%            // The convex cone contains no positive definite elements.
%            continue;
%        end if;
%        LS1 := LatticeWithGram(S1_sum);
%        LS2 := LatticeWithGram(S2_sum);
%        G1 := AutomorphismGroup(LS1);
%        flag, U := IsIsometric(LS1, LS2);
%        if( flag and exists(g){ g : g in G1 |$\mid$| forall(t){ S_pair : S_pair in S_pairs |$\mid$| (U * g) * S_pair[1] * Transpose(U * g) eq S_pair[2] } }) then
%            // There exists g in GL_3(\ZZ) such that g*S1*g^T = S2.
%            continue;
%        end if;
%        // Otherwise output (S1, S2) in the extreme rays. 
%        Rays1;
%    end for;
%end for;
%\end{lstlisting}} \\
%    \hline
%\end{longtable}
%\end{table}
%
%\begin{table}
%\begin{longtable}{|p{115mm}|}
%    \hline
%{\begin{lstlisting}[firstnumber=last]
%    // At this point, the entries of Ceq and FasetSet provide all the active inequalities defining V(Phi0).
%    for fset in FacetSet do
%        Phi := fset[1];
%        LS := LatticeWithGram(fset[2]);
%        G := AutomorphismGroup(LS);
%        newflag := true;
%        for fset2 in OutputDomain cat RejectedDomain cat NewDomain do
%            Phi2 := fset2[1];
%            flag, U := IsIsometric(LS, LatticeWithGram(fset2[2]) );
%            if( flag and 
%                exists(g){ g : g in G |{ v * Transpose(U * g) : v in Phi } eq Phi2 } ) then  
%                newflag := false;
%                break;
%            end if; 
%        end for;      
%        if( newflag ) then
%             Include(~NewDomain, <Phi, fset[2]>);
%        end if;
%    end for;
%end while;
%\end{lstlisting}} \\
%    \hline
%\end{longtable}
%\end{table}

\bibliography{sn-bibliography}% common bib file
%% if required, the content of .bbl file can be included here once bbl is generated
%%\input sn-article.bbl

%% Default %%
%%\input sn-sample-bib.tex%

\end{document}